\newtheorem{theorem}{Theorem}
\newtheorem{lemma}[theorem]{Lemma}
\newtheorem{problem}{Problem}
\newenvironment{proof}[1][Proof.]{\begin{trivlist}
\item[\hskip \labelsep {\bfseries #1}]}{\end{trivlist}}
\newcommand{\AmS}{{\protect\the\textfont2
  A\kern-.1667em\lower.5ex\hbox{M}\kern-.125emS}}
\numberwithin{theorem}{section}
\title{Complexity results on locally-balanced $2$-partitions of graphs}
\author{Aram H. Gharibyan\address[MCSD]{Department of Informatics and Applied Mathematics,\\
Yerevan State University, 0025, Armenia\\
}\thanks{email: aramgharibyan@gmail.com},
    Petros A. Petrosyan\addressmark[MCSD]\thanks{email: petros\_petrosyan@ysu.am}}
\begin{document}

\maketitle

\begin{abstract}
A \emph{$2$-partition of a graph $G$} is a function $f:V(G)\rightarrow \{0,1\}$. A $2$-partition $f$ of a graph $G$ is a \emph{locally-balanced with an open neighborhood} if for every $v\in V(G)$, 
$$\left\vert \vert \{u\in N_{G}(v)\colon\,f(u)=0\}\vert - \vert
\{u\in N_{G}(v)\colon\,f(u)=1\}\vert \right\vert\leq 1.$$

A $2$-partition $f^{\prime}$ of a graph $G$ is a \emph{locally-balanced with a closed neighborhood} if for every $v\in V(G)$, 
$$\left\vert \vert \{u\in N_{G}[v]\colon\,f^{\prime}(u)=0\}\vert - \vert \{u\in N_{G}[v]\colon\,f^{\prime}(u)=1\}\vert \right\vert\leq 1.$$

In this paper we prove that the problem of the existence of locally-balanced $2$-partition with an open (closed) neighborhood is $NP$-complete for some restricted classes of graphs. In particular, we show that the problem of deciding if a given graph has a locally-balanced $2$-partition with an open neighborhood is $NP$-complete for biregular bipartite graphs and even bipartite graphs with maximum degree $4$, and the problem of deciding if a given graph has a locally-balanced $2$-partition with a closed neighborhood is $NP$-complete even for subcubic bipartite graphs and odd graphs with maximum degree $3$. Last results prove a conjecture of Balikyan and Kamalian.\\ 

\textbf{Keywords:} $2$-partition; locally-balanced $2$-partition; $NP$-completeness; bipartite graph; biregular bipartite graph; even (odd) graph
\end{abstract}


\section{Introduction}
\label{intro}
In this paper all graphs are finite, undirected, and have no loops or multiple edges, unless otherwise stated. Let $V(G)$ and $E(G)$ denote the sets of vertices and edges of a graph $G$, respectively. The set of neighbors of a vertex $v$ in $G$ is denoted by $N_{G}(v)$. Let $N_{G}[v]= N_{G}(v)\cup \{v\}$. The degree of a vertex $v\in V(G)$ is denoted by $d_{G}(v)$ and the maximum degree of vertices in $G$ by $\Delta(G)$. A graph $G$ is \emph{even} (\emph{odd}) if the degree of every vertex of $G$ is even (odd). A bipartite graph is \emph{$(a,b)$-biregular} if all vertices in one part have degree $a$ and all vertices in the other part have degree $b$. The terms and concepts that we do not define can be found in \cite{ChartZhang,West}.

One of the important areas of research in graph theory is the one of graph partition problems. The main motivation for that is the fact of the existence of many different applications of graph partition problems such as VLSI design, parallel computing, task scheduling, clustering and detection of communities, cliques and cores in complex networks, etc., and the existence of many problems in graph theory which can be formulated as graph partition problems (factorization problems, coloring problems, clustering problems, problems of Ramsey Theory). For example, the problem of finding the arboricity of a graph is the one of decomposing of a graph into minimum number of forests, the problem of determining of the chromatic number of a graph is the problem of decomposing of a graph into minimum number of independent sets, and the problem of determining of the chromatic index of a graph is the one of decomposing of a graph into minimum number of matchings. Some other applications of graph partition problems can be found in \cite{AndreevRacke}.

The concept of locally-balanced $2$-partition of graphs was introduced by Balikyan and Kamalian \cite{BalKamNP1} in 2005 and motivated by the problems concerning a distribution of influences of two different powers, which minimizes the probability of conflicts. The subjects of a modelling system may or may not have an ability of self-defense. Locally-balanced $2$-partitions of graphs also can be considered as a special case of equitable colorings of hypergraphs \cite{Berge}. In \cite{Berge}, Berge  obtained some sufficient conditions for the existence of equitable colorings of hypergraphs. Ghouila-Houri \cite{Ghouila-Houri} characterized unimodular hypergraphs in terms of partial equitable colorings and proved that a hypergraph $H=(V,E)$ is unimodular if and only if for each
$V_{0}\subseteq V$ there is a $2$-coloring $\alpha:V_{0}\rightarrow \{0,1\}$ such that for every $e\in E$, $\left\vert|e\cap \alpha^{-1}(0)|-|e\cap \alpha^{-1}(1)|\right\vert\leq 1$. In \cite{HajSzem,Kostochka,Meyer,deWerra}, it was considered the problems of the existence and construction of proper vertex-coloring of a graph for which the number of vertices in any two color classes differ by at most one. In \cite{Kratochvil}, $2$-vertex-colorings of graphs were considered for which each vertex is adjacent to the same number of vertices of every color. In particular, Kratochvil \cite{Kratochvil} proved that the problem of the existence of such a coloring is $NP$-complete even for the $(2p,2q)$-biregular ($p,q\geq 2$) bipartite graphs. Moreover, he also showed that the problem of the existence of the aforementioned coloring for the $(2,2q)$-biregular ($q\geq 2$) bipartite graphs can be solved in polynomial time. Gerber and Kobler \cite{GerKob1,GerKob2} suggested to consider the problem of deciding if a given graph has a $2$-partition with nonempty parts such that each vertex has at least as many neighbors in its part as in the other part. In \cite{BazTuzaVand}, it was proved that the problem is $NP$-complete. 
In \cite{BalKamNP1}, Balikyan and Kamalian proved that the problem of existence of locally-balanced $2$-partition with an open neighborhood of bipartite graphs with maximum degree $3$ is $NP$-complete. In 2006, the similar result for locally-balanced $2$-partitions with a closed neighborhood was also proved \cite{BalKamNP2}. In \cite{Bal,BalKam}, the necessary and sufficient conditions for the existence of locally-balanced $2$-partitions of trees were obtained. In \cite{GharibPet}, Gharibyan and Petrosyan obtained the necessary and sufficient conditions for the existence of locally-balanced $2$-partitions of complete multipartite graphs. Recently, Gharibyan \cite{AramYSU} studied locally-balanced $2$-partitions of even and odd graphs. In particular, he gave necessary conditions for the existence of locally-balanced $2$-partitions of these graphs. In \cite{GharibPet1}, Gharibyan and Petrosyan proved that if $G$ is a subcubic bipartite graph that has no cycle of length $2 \pmod{4}$, then $G$ has a locally-balanced $2$-partition with an open neighborhood.

In the present paper we study the complexity of the problem of the existence of locally-balanced $2$-partition with an open (closed) neighborhood of graphs. In particular, we prove that the problem of deciding if a given graph has a locally-balanced $2$-partition with an open neighborhood is $NP$-complete even for $(3,8r)$-biregular bipartite graphs for a fixed $r$ and even bipartite graphs with maximum degree $4$, and the problem of deciding if a given graph has a locally-balanced $2$-partition with a closed neighborhood is $NP$-complete even for subcubic bipartite graphs and odd graphs with maximum degree $3$. We also show that a $(2,2k+1)$-biregular bipartite graph ($k\geq 1$) has a locally-balanced $2$-partition with an open neighborhood if and only if it has no cycle of length $2\pmod{4}$.\\

\section{Notation, definitions and auxiliary results}
\label{Def}
In this section we introduce some terminology and notation. If $G$ is a connected graph, the distance between two vertices $u$ and $v$ in $G$, we denote by $d_G(u,v)$. If $\varphi$ is a $2$-partition of a graph $G$ and $v \in V(G)$, then define $\#(v)_{\varphi}$, $\#[v]_{\varphi}$ and $\varphi^*(v)$ as follows: 
$$\#(v)_{\varphi} = \vert \{u\in N_{G}(v)\colon\,\varphi(u)=0\}\vert - \vert
\{u\in N_{G}(v)\colon\,\varphi(u)=1\}\vert,$$
$$\#[v]_{\varphi} = \vert \{u\in N_{G}[v]\colon\,\varphi(u)=0\}\vert - \vert \{u\in N_{G}[v]\colon\,\varphi(u)=1\}\vert,$$
\begin{center}
   $\varphi^*(v) = \left\{
\begin{tabular}{ll}
$-1$, & if  $\varphi(v) = 0$, \\
$1$, & if $\varphi(v) = 1$. \\
\end{tabular}%
\right.$
\end{center}
 
A spanning subgraph $F$ of a graph $G$ is called an \emph{$[a, b]$-factor} of $G$ if $a \leq d_F(v) \leq b$ for each $v \in V(G)$. We will use the following result from factor theory.

\begin{theorem}\label{mytheorem6}
(\cite{Tutte}) Let $k$ and $r$ be integers such that $1 \leq k < r$. Then every $r$-regular graph (where multiple edges and loops are allowed) has a $[k, k + 1]$-factor.
\end{theorem}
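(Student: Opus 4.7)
The plan is to proceed by induction on $r$, with the base case $r = k+1$ being immediate: then $G$ itself is a spanning $(k+1)$-regular subgraph, hence an $[k, k+1]$-factor. For the inductive step with $r \geq k+2$, the argument splits according to the parity of $r$.

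If $r$ is even, I would invoke Petersen's classical $2$-factor theorem for even-regular multigraphs to obtain a $2$-factor $F$ of $G$. Removing $F$ leaves an $(r-2)$-regular multigraph $G \setminus F$; since $1 \leq k \leq r-2$, the inductive hypothesis applied to $G \setminus F$ produces a $[k, k+1]$-factor, which is also a $[k, k+1]$-factor of $G$. A small alternative, avoiding recursion, is to take $\lfloor k/2 \rfloor$ whole $2$-factors from Petersen's decomposition and, if $k$ is odd, augment with a $[1,2]$-subfactor of an additional $2$-factor (which always exists, since each cycle can be kept whole to contribute degree $2$ to each of its vertices).

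If $r$ is odd, Petersen's theorem is unavailable and this is the main obstacle. For simple $r$-regular graphs there is a clean edge-coloring argument: by Vizing's theorem $\chi'(G) \leq r+1$, so a proper edge coloring using at most $r+1$ colors exists, and every vertex, having degree $r$, is incident to $r$ distinct colors and therefore misses at most one. Selecting any $k+1$ of the $r+1$ colors yields a spanning subgraph in which each vertex has exactly $k$ or $k+1$ selected edges, as desired. To handle multigraphs with multi-edges and loops, I would invoke Tutte's $f$-factor theorem (or equivalently Lovász's $(g,f)$-factor theorem with $g \equiv k$ and $f \equiv k+1$), verifying the associated deficiency condition for all disjoint $S, T \subseteq V(G)$. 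Thanks to the $r$-regularity of $G$, the deficiency sum reduces to a counting argument on the edges of the $(S, T)$-cut together with an odd-component correction, and the uniform degree $r$ forces the inequality to hold tightly.

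The main obstacle is the verification of the Tutte deficiency condition in the odd-$r$ multigraph case, where a careful parity argument on the components of $G - S - T$ is needed. An alternative strategy I would consider is to orient $G$ via an Eulerian argument on the multigraph obtained by adjoining a fictitious vertex to all odd-degree vertices, producing an orientation in which each vertex of $G$ has in- and out-degrees in $\{(r-1)/2, (r+1)/2\}$; then build the auxiliary bipartite graph $H$ on $V(G) \times \{+,-\}$ by adding $u^+v^-$ for each arc $(u,v)$, apply König's edge-coloring theorem to partition $E(H)$ into $(r+1)/2$ matchings, and combine an appropriate number of these matchings to translate back into a $[k,k+1]$-factor of $G$. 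The subtle point is ensuring that the combined matchings saturate every vertex with exactly $k$ or $k+1$ underlying edges, which may require selecting matchings adaptively rather than arbitrarily.
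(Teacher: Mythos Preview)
The paper does not prove this statement: it is quoted with attribution to Tutte (1978) as an auxiliary tool, and no argument is supplied in the paper itself, so there is nothing to compare your proof against.

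On the merits of your sketch: the induction on $r$ with Petersen's $2$-factor theorem handling the even-$r$ step is correct and works for multigraphs with loops (one small edge case you glide over: when $r-2=k$ you cannot invoke the inductive hypothesis, but then $G\setminus F$ is already $k$-regular and hence a $[k,k+1]$-factor). The Vizing argument for odd $r$ is also correct, but only for \emph{simple} graphs; the theorem as stated explicitly allows multiple edges and loops, and Vizing's $\Delta+1$ bound fails there. You acknowledge this and propose two escape routes---verifying the Lov\'asz $(g,f)$-factor deficiency condition, or an Eulerian orientation followed by K\"onig on the bipartite split---but neither is actually carried out. The deficiency inequality is asserted (``the uniform degree $r$ forces the inequality to hold tightly'') rather than checked, and the orientation-plus-matchings approach ends with your own admission that it ``may require selecting matchings adaptively.'' So the proposal has a genuine gap precisely at the odd-$r$ multigraph case, which is the substantive part of the theorem. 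The Lov\'asz criterion with $g\equiv k$, $f\equiv k+1$ does go through for $r$-regular multigraphs, but the parity bookkeeping on the odd components is not automatic and needs to be written out.
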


If $F$ is a graph and $\{v_1,\ldots,v_k\}\subseteq V(F)$, then we call a graph $F \left[v_1, \ldots, v_k \right]$ \emph{reduction element} with an input set $\{v_1,\ldots,v_k\}$.

For a reduction element $F \left[v_1, \ldots, v_k \right]$ and a graph $H$ where $V(H) \cap V(F) = \emptyset$, we define a new graph $G=(F \left[v_1, \ldots, v_k \right] \cup H)_{E'}$ as follows:
\begin{gather*}
V(G) = V(F) \cup V(H),\\
E(G) = E(F) \cup E(H) \cup E',
\end{gather*}
where $E' \subseteq	\{uv : v \in \{v_1, \ldots v_k \}, u \in V(H)\}$.

We denote by $V=\{x_1, \ldots, x_n\}$ a finite set of variables. A literal is either a variable $x$ or a negated variable $\overline{x}$. We denote by $L_V=\{x, \overline{x} : x \in V\}$ the set of literals. A \emph{clause} is a set of literals, i.e., a subset of $L_V$, and a \emph{$k$-clause} is one which contains exactly $k$ distinct literals. A clause is \emph{monotone} if all of its involved variables contain no negations.

We define a function $NAE_n : \{0,1\}^n \rightarrow \{0,1\}$ in the following way:
\[
NAE_n(x_1, x_2, \ldots, x_n) = \left\{
\begin{tabular}{ll}
$0$, & if $x_1 = x_2 = \cdots = x_n$,\\
$1$, & otherwise.
\end{tabular}%
\right.
\]
If $c$ is a monotone $k$-clause and $x_{i_1},x_{i_2}, \ldots, x_{i_k} \in c$, then define $NAE_k(c)$ as follows:
\[
NAE_k(c) = NAE_k(x_{i_1},x_{i_2}, \ldots, x_{i_k})
\]

Let us consider the following

\begin{problem}[NAE-3-Sat-E4]\label{myproblem1}

\textit{Instance}: Given a set $V=\{x_1,\ldots,x_n\}$ of variables and a collection $C=\{c_1,\ldots,c_k\}$ of monotone $3$-clauses over $V$ such that every variable appears in exactly four clauses.

\textit{Question}: Is $f(x_1,\ldots,x_n)=NAE_3(c_1) \; \land \; \cdots \; \land \; NAE_3(c_k)$ formula satisfiable?
\end{problem}
In \cite{3Sat}, it was proved the following result.

\begin{theorem}\label{mytheorem1}
Problem \ref{myproblem1} is $NP$-complete.
\end{theorem}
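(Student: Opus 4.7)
Membership in NP is immediate: given an assignment $\alpha\colon V\to\{0,1\}$, each of the $k$ monotone $3$-clauses can be verified in constant time, so the total verification is $O(k)$. The content of the theorem is therefore NP-hardness.

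For hardness, the plan is to reduce from monotone NAE-$3$-SAT without the E4 restriction, which is NP-complete as a standard consequence of Schaefer's dichotomy theorem. Given a monotone NAE-$3$-SAT instance $(V,C)$, I would construct an equisatisfiable instance $(V',C')$ in which every variable appears in exactly four clauses.

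The central tool is a monotone \emph{equality gadget} $E(a,b)$: a small monotone NAE-$3$-SAT subformula over $a$, $b$, and fresh auxiliary variables that is NAE-satisfiable exactly when $a$ and $b$ receive the same value, and whose auxiliary variables are designed to occur exactly four times internally. Using $E$, I would replace every variable $x$ occurring $t$ times in $C$ by $t$ fresh copies $x^{(1)},\ldots,x^{(t)}$---one substituted into each original clause containing $x$---and link them in a cycle $E(x^{(1)},x^{(2)}),E(x^{(2)},x^{(3)}),\ldots,E(x^{(t)},x^{(1)})$. In any NAE-satisfying assignment the copies are then forced to share a common value, so the resulting formula is NAE-satisfiable iff the original one is. Each copy $x^{(i)}$ now participates in one original clause plus a fixed number of gadget clauses; tuning this number (or inserting a small absorber subformula when needed) brings every copy's occurrence count to exactly four.

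The step I expect to be delicate is the design of the equality gadget itself. Monotone NAE-SAT is invariant under the global bit flip, so the gadget cannot distinguish $a=b=0$ from $a=b=1$; it must only rule out the two unequal assignments of $(a,b)$. Realising this with exclusively monotone $3$-clauses while keeping every auxiliary variable at occurrence count exactly four, without spawning a regress of low-degree variables that themselves need padding, is the main technical obstacle. I anticipate needing a small catalog of gadget variants to absorb different local degree deficits; once this library is in hand, a global counting argument verifies that $(V',C')$ satisfies the E4 condition and completes the reduction.
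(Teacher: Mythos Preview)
The paper does not prove this theorem: it is quoted from Darmann and D\"ocker~\cite{3Sat} and used as a black box for the reductions in Sections~\ref{NPC:Open} and~\ref{NPC:Closed}. There is therefore no ``paper's own proof'' to compare your attempt against.

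As for your plan on its own merits, the replication-plus-equality-chain strategy is the natural first attempt, but what you have written remains a sketch rather than a proof. The monotone equality gadget $E(a,b)$ is asserted but never constructed, and you yourself flag its design as ``the main technical obstacle.'' In monotone NAE the global-flip symmetry severely limits what a small collection of monotone $3$-clauses can enforce between two designated variables; arranging such a gadget so that it (i) forces $a=b$, (ii) leaves every auxiliary variable at occurrence count exactly four, and (iii) does not spawn further low-degree variables requiring their own padding is precisely the substantive content of the theorem, not a detail to be deferred. Until the gadget and the accompanying absorbers are written down explicitly and their occurrence counts verified, the reduction is incomplete. If you want a self-contained argument here, you should either reproduce the construction from~\cite{3Sat} or exhibit your own concrete gadget; otherwise, citing~\cite{3Sat} as the paper does is the honest option.
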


\section{Complexity results on locally-balanced 2-partitions with an open neighborhood}
\label{NPC:Open}

In this section we consider the problem of the existence of locally-balanced $2$-partitions with an open neighborhood of bipartite graphs. In \cite{Kratochvil}, Kratochvil showed that the problem of the existence of the locally-balanced $2$-partition with an open neighborhood of the $(2,2k)$-biregular ($k\geq 2$) bipartite graphs can be solved in polynomial time. Here, we consider locally-balanced $2$-partitions with an open neighborhood of the $(2,2k+1)$-biregular ($k\in \mathbb{N})$) bipartite graphs. We begin our considerations with the following simple lemma.

\begin{lemma}\label{mylemma}
If $\varphi$ is a locally-balanced $2$-partition with an open neighborhood of a graph $G$, then for every $v \in V(G)$ with $d_G(v)=2$, $\varphi(u_1) \neq \varphi(u_2)$, where $vu_1,vu_2 \in E(G) ~(u_1\neq u_2)$.
\end{lemma}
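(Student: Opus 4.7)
The plan is to apply the defining inequality of a locally-balanced $2$-partition with an open neighborhood directly at the vertex $v$, and reach a contradiction under the assumption $\varphi(u_1)=\varphi(u_2)$.

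First I would observe that since $d_G(v)=2$ and $vu_1, vu_2 \in E(G)$ with $u_1\neq u_2$, we have exactly $N_G(v)=\{u_1,u_2\}$. Hence the quantity
\[
\#(v)_{\varphi} = \vert \{u\in N_{G}(v)\colon\,\varphi(u)=0\}\vert - \vert \{u\in N_{G}(v)\colon\,\varphi(u)=1\}\vert
\]
is determined entirely by the two values $\varphi(u_1),\varphi(u_2)\in\{0,1\}$.

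Next I would argue by contradiction: suppose that $\varphi(u_1)=\varphi(u_2)$. If both equal $0$, then $|\#(v)_{\varphi}|=|2-0|=2$; if both equal $1$, then $|\#(v)_{\varphi}|=|0-2|=2$. In either case $|\#(v)_{\varphi}|=2>1$, contradicting the assumption that $\varphi$ is locally-balanced with an open neighborhood. Therefore $\varphi(u_1)\neq\varphi(u_2)$.

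There is no real obstacle here; the lemma is an immediate consequence of the definition applied to a degree-$2$ vertex, and the only work is to notice that with only two neighbors the balance condition forces them to receive opposite labels. I would keep the written proof to a couple of lines.
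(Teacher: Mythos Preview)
Your proof is correct and follows essentially the same approach as the paper: both apply the defining inequality at the degree-$2$ vertex $v$ and observe that the two neighbors must receive opposite labels. The only cosmetic difference is that the paper argues directly (noting $\#(v)_\varphi=0$, hence the two neighbor classes have equal size), whereas you phrase it as a contradiction; this is not a substantive distinction.
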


\begin{proof}
Suppose $\varphi$ is a locally-balanced $2$-partition with an open neighborhood of $G$. Let us consider a vertex $v \in V(G),\, where \; d_G(v)=2$. Then
\[
 \#(v)_\varphi = 0 = |\{ u: \; u\in N_G(v), \; \varphi(u)=0\}| - |\{ u: \; u\in N_G(v), \; \varphi(u)=1\}|; 
\]
hence
$|\{ u: \; u\in N_G(v), \; \varphi(u)=0\}| = |\{ u: \; u\in N_G(v), \; \varphi(u)=1\}|$. This implies that $\varphi(u_1) \neq \varphi(u_2)$. \hfill $\Box$
\end{proof}

Our first result was earlier proved in \cite{GharibPet1}, but for the sake of completeness we decide to include it.

\begin{theorem}\label{mytheorem7}
A $(2,2k+1)$-biregular bipartite graph $G$ ($k\in \mathbb{N}$) with bipartition $(X,Y)$ has a locally-balanced $2$-partition with an open neighborhood if and only if it has no cycle of length $2 \pmod{4}$.
\end{theorem}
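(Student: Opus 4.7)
The plan is to encode the problem via an auxiliary $(2k+1)$-regular multigraph $G'$ on vertex set $Y$, where $X$ denotes the side of degree $2$ and $Y$ the side of degree $2k+1$. For each $x \in X$ I add one edge of $G'$ joining the two $Y$-neighbors of $x$. Since $G$ is simple, $G'$ has no loops (although multi-edges may occur), and each $y \in Y$ has degree $2k+1$ in $G'$. Moreover, a cycle $y_1 x_1 y_2 x_2 \cdots y_\ell x_\ell y_1$ of length $2\ell$ in $G$ becomes a cycle $y_1 y_2 \cdots y_\ell y_1$ of length $\ell$ in $G'$, so $G$ has a cycle of length $\equiv 2 \pmod 4$ if and only if $G'$ contains an odd cycle, i.e., $G'$ is not bipartite. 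Establishing this dictionary between $G$ and $G'$ is the first step.

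For the forward direction, I will apply Lemma \ref{mylemma} to any locally-balanced $2$-partition $\varphi$ of $G$ with an open neighborhood: the two $Y$-neighbors of every $x \in X$ must receive opposite values, and this is precisely the statement that $\varphi|_Y$ is a proper $2$-coloring of $G'$. Hence $G'$ is bipartite and $G$ contains no cycle of length $\equiv 2 \pmod 4$.

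For the converse, suppose $G$ has no such cycle, so that $G'$ is bipartite. First, fix a proper $2$-coloring $\psi : Y \to \{0,1\}$ and let $\varphi$ agree with $\psi$ on $Y$. To color $X$, invoke Theorem \ref{mytheorem6} with $r = 2k+1$ (taking the integer $k$ of that theorem equal to our $k$, which satisfies $1 \leq k < 2k+1$) to obtain a $[k,k+1]$-factor $F$ of $G'$. Identifying each $x \in X$ with its corresponding edge of $G'$, set $\varphi(x) = 0$ if that edge lies in $F$ and $\varphi(x) = 1$ otherwise. The condition at each $x$ is then immediate from $\psi$, while at each $y \in Y$ the $2k+1$ neighbors in $X$ split as $d_F(y) \in \{k,k+1\}$ labelled $0$ and $2k+1 - d_F(y) \in \{k,k+1\}$ labelled $1$, giving a local imbalance of exactly $1$, as required.

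I expect the main obstacle to be precisely this converse direction: properly $2$-coloring $Y$ says nothing on its own about how $X$ should be coloured, yet each $y$ must see almost as many $0$'s as $1$'s among its $2k+1$ $X$-neighbours. Theorem \ref{mytheorem6} is tailor-made for this, and the parity of $2k+1$ is essential, since it forces any $[k,k+1]$-factor of $G'$ to partition $N_G(y)$ into parts of sizes exactly $k$ and $k+1$. A minor but necessary check is that $G'$ really has no loops (which follows from simplicity of $G$), so that the multigraph version of Theorem \ref{mytheorem6} applies.
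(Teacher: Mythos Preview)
Your proposal is correct and follows essentially the same approach as the paper: both build the auxiliary $(2k+1)$-regular multigraph $G'$ on $Y$ (with one edge per $x\in X$) and invoke Theorem~\ref{mytheorem6} to obtain a $[k,k+1]$-factor that determines $\varphi$ on $X$. The only cosmetic difference is that the paper colors $Y$ via $d_G(y,\bar y)\bmod 4$ and then verifies the condition at each $x\in X$ by a closed-walk contradiction, whereas you observe up front that ``no cycle of length $\equiv 2\pmod 4$ in $G$'' is equivalent to ``$G'$ bipartite'' and take $\varphi|_Y$ to be a proper $2$-coloring of $G'$---this is the same coloring, and your phrasing makes the check at the $X$-vertices immediate.
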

\begin{proof}
Without loss of generality we may assume that $G$ is connected.
Next, assume that $C=x_{i_1},y_{j_1},x_{i_2},y_{j_2}, \ldots, x_{i_{2r+1}},y_{j_{2r+1}},x_{i_1}$ is a cycle of length of $4r+2$ $(r \geq 1)$, where $x_{i_l} \in X$ and $y_{i_l} \in Y$ $(1 \leq l \leq 2r+1)$. Suppose, to the contrary, that there exists a locally-balanced $2$-partition with an open neighborhood $\varphi$ of $G$. By Lemma \ref{mylemma}, we have
\begin{equation}\label{myequation23}
    \varphi(y_{j_l}) \neq \varphi(y_{j_{(l \bmod (2r+1)) + 1}}) ~(1 \leq l \leq 2r+1).
\end{equation}
Hence
\[
    \varphi(y_{j_1}) = \varphi(y_{j_3}) = \ldots = \varphi(y_{j_{2r+1}}),
\]
which contradicts (\ref{myequation23}) for $l=1$.

Now suppose that $G$ has no cycle of length $4r+2$ $(r \geq 1)$. Let $Y^{(2)}=\{yy': y,y'\in Y, y\neq y'\}$. We define a function $f:X\rightarrow Y^{(2)}$ as follows: for every $x \in X$, let
\[
f(x) = uv \text{, where } u \in N_G(x) \text{ and } \, v \in N_G(x).
\]

Let us now construct a graph $G'$ (where multiple edges are allowed) in the following way:
\begin{gather*}
V(G') = Y,\\
E(G') \text{ contains all possible edges $f(x)$, where $x \in X$}.
\end{gather*}
It is easy to see that $G'$ is a $(2k+1)$-regular graph (where multiple edges are allowed). By Theorem \ref{mytheorem6}, $G'$ has a $[k,k+1]$-factor $H$. Let $\overline{y}\in V(G')$ be a vertex. Now, let us define a $2$-partition $\varphi$ of $G$ by two steps as follows:

\begin{enumerate}
    \item For $x \in X$, let 
    \[ 
        \varphi(x) = \left\{
        \begin{tabular}{ll}
            $1$, & if $f(x) \in E(H)$,\\
            $0$, & otherwise. 
        \end{tabular}%
        \right.
    \]
    \item For $y \in Y$, let  
    \[ 
        \varphi(y) = \left\{
        \begin{tabular}{ll}
            $1$, & if $d_G(y,\overline{y}) \bmod 4 = 0$,\\
            $0$, & if $d_G(y,\overline{y}) \bmod 4 = 2$. 
        \end{tabular}%
        \right.
    \]
\end{enumerate}

Let us show that $\varphi$ is a locally-balanced $2$-partition with an open neighborhood. Let us first consider the vertices of $Y$. Clearly, for any $v \in V(G')$, 
\[
||\{u : uv \in E(H)\}| - |\{u : uv \notin E(H)\}|| \leq 1.
\]
This implies that for any $y \in Y$, 
\[
\left\vert\#(y)_\varphi\right\vert \leq 1.
\]
Let us now consider the vertices of $X$. Suppose, to the contrary, that there exists $x_0 \in X$, where $x_0y_{i_1},x_0y_{i_2}\in E(G)$, $y_{i_1} \neq y_{i_2}$ such that $\varphi(y_{i_1}) = \varphi(y_{i_2})$. This implies that 
\begin{equation}\label{myequation24}
    d_G(\overline{y},y_{i_1}) \bmod 4 = d_G(\overline{y},y_{i_2}) \bmod 4.
\end{equation}

Let $P_1$ be the shortest path between the vertices $y_{i_1}$ and $\overline{y}$ with the length $r_1$ and $P_2$ be the shortest path between the vertices $\overline{y}$ and $y_{i_2}$ with the length $r_2$. From this and taking into account (\ref{myequation24}), we obtain
\begin{equation}\label{myequation25}
    (r_1 + r_2) \bmod 4 = 0.
\end{equation}
Let us consider a closed walk $x_{0},P_1,P_2,x_{0}$. The length of the closed walk is $r_1+r_2+2 = 4p + 2 ~(p \geq 1)$. It can be shown (by induction on the length of the closed walk) that each such a closed walk contains a cycle of length $4t+2$ in a graph $G$ ($t \leq p$), which is a contradiction. \hfill $\Box$
\end{proof}

This result together with the result of Kratochvil \cite{Kratochvil} imply that the problem of the existence of locally-balanced $2$-partition with an open neighborhood of $(a,b)$-biregular bipartite graphs with $\min\{a,b\}\leq 2$ can be solved in polynomial time. On the other hand, in \cite{GharibPet1}, it was shown that the same problem is $NP$-complete for $(3,8)$-biregular bipartite graphs. Here, we prove a more general result. 
Let us consider the following  

\begin{problem}\label{myproblem2}

\textit{Instance}: For a fixed $r$, given a $(3,8r)$-biregular bipartite graph $G$.

\textit{Question}: Does $G$ have a locally-balanced 2-partition with an open neighborhood?
\end{problem}

\begin{theorem}\label{mytheorem2}
Problem \ref{myproblem2} is $NP$-complete.
\end{theorem}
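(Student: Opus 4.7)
Membership in NP is immediate: a candidate 2-partition $\varphi$ is a short certificate, and checking $|\#(v)_\varphi|\le 1$ at every vertex takes polynomial time. For NP-hardness, the plan is to reduce from NAE-3-Sat-E4 (Problem \ref{myproblem1}), which is NP-complete by Theorem \ref{mytheorem1}, converting a monotone 3-CNF formula $\Phi$ with each variable appearing in exactly four clauses into a $(3,8r)$-biregular bipartite graph $G_\Phi$ that admits a locally-balanced 2-partition with an open neighborhood if and only if $\Phi$ is satisfiable.

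The core of the construction places, for every clause $c=\{x_i,x_j,x_k\}$ of $\Phi$, a single clause-vertex $v_c$ in the degree-$3$ part $X$ and joins it to variable-vertices $y_i,y_j,y_k$ lying in the degree-$8r$ part $Y$. Since $v_c$ has degree $3$, local balance at $v_c$ forces its three neighbors to be non-monochromatic, which is precisely the constraint $NAE_3$ on the triple $(\varphi(y_i),\varphi(y_j),\varphi(y_k))$. Each variable-vertex $y_\ell$ acquires exactly four such external edges, so to lift its degree to $8r$ I would attach to each $y_\ell$ a padding reduction element $P_\ell[y_\ell]$ supplying $8r-4$ further degree-$3$ internal neighbors of $y_\ell$, together with additional interior $X$- and $Y$-vertices chosen so that $P_\ell$ is $(3,8r)$-biregular at every vertex other than $y_\ell$ itself.

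The main obstacle is to design $P_\ell$ with the following flexibility property: for every choice of $\varphi(y_\ell)\in\{0,1\}$ and every label-pattern on the four clause-vertices adjacent to $y_\ell$, the partial labeling extends to a locally-balanced labeling of all of $P_\ell$. Since local balance at $y_\ell$ demands exactly $4r$ zeros and $4r$ ones among its $8r$ neighbors, it suffices to build $P_\ell$ so that the $8r-4$ internal $X$-neighbors of $y_\ell$ can together carry any number of zeros in $\{4r-4,4r-3,4r-2,4r-1,4r\}$. I would achieve this by embedding inside $P_\ell$ four independent \emph{flip gadgets}, each a small $(3,8r)$-biregular bipartite block attached to $y_\ell$ through a pair of swappable $X$-vertices whose combined zero-count on the $y_\ell$-side can shift by one in any valid extension, and fleshing out the remaining bulk of $P_\ell$ with $(3,8r)$-biregular bipartite pieces whose existence, together with the balanced labelings they require, is underwritten by Theorem \ref{mytheorem6} applied to suitable auxiliary regular multigraphs.

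With $P_\ell$ in hand, the only binding constraints of $G_\Phi$ are the $NAE_3$ conditions at the clause-vertices $v_c$, as every other local-balance condition is absorbed into the flexibility of the variable gadgets. Consequently, a satisfying assignment for $\Phi$ translates into a locally-balanced 2-partition of $G_\Phi$ by setting $\varphi(y_\ell)$ equal to the truth value of $x_\ell$ and completing inside each $P_\ell$, while conversely any locally-balanced 2-partition of $G_\Phi$ yields a satisfying assignment via $x_\ell\mapsto\varphi(y_\ell)$. Finally, with $r$ fixed, $G_\Phi$ is bipartite, $(3,8r)$-biregular, and of size polynomial in $|V|+|C|$, so the reduction runs in polynomial time, completing the NP-completeness proof.
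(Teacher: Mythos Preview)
Your reduction idea is sound in spirit, but the proof has a real gap: the padding gadget $P_\ell$ is never actually constructed. You describe what it must do---attach $8r-4$ degree-$3$ neighbors to $y_\ell$, stay $(3,8r)$-biregular internally, and admit a locally-balanced extension for \emph{every} configuration of values on the four clause-neighbors of $y_\ell$---but the construction is only gestured at (``four independent flip gadgets'', ``fleshing out the remaining bulk''). Theorem~\ref{mytheorem6} guarantees $[k,k+1]$-factors in regular multigraphs and can be used to exhibit one balanced labeling of a fixed bipartite graph, but it does not help you \emph{build} a bipartite block with the specific five-way flexibility you need at $y_\ell$, nor does it certify that such a block exists with all internal $X$-vertices of degree exactly $3$ and all internal $Y$-vertices of degree exactly $8r$. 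Until $P_\ell$ is written down explicitly and its flexibility property verified, the reduction is incomplete.

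The paper sidesteps this entirely with a much simpler idea. Rather than one clause-vertex per clause plus padding at each variable-vertex, it places $2r$ copies $q_j^1,\ldots,q_j^{2r}$ of each clause $c_j$, each joined to the three variable-vertices $p_i$ with $x_i\in c_j$. Since every variable occurs in exactly four clauses, each $p_i$ automatically has degree $4\cdot 2r=8r$ and each $q_j^l$ has degree $3$, so the graph is $(3,8r)$-biregular with no gadgets whatsoever. Balance at the $p_i$'s is then free---set $\varphi(q_j^l)=l\bmod 2$ so the $2r$ copies of each clause contribute $r$ zeros and $r$ ones---and balance at each degree-$3$ vertex $q_j^l$ is precisely the NAE condition on $c_j$. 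You should rework your argument along these lines; it eliminates the unconstructed gadget and reduces the whole proof to a few lines.
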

\begin{proof}
It is easy to see that Problem 2 is in $NP$. For the proof of the $NP$-completeness, we show a reduction from Problem 1 to Problem 2. Let $\mathcal{I}=(V,C)$ be an instance of Problem 1, where $V = \{x_1, \ldots, x_n\}$ and $C = \{c_1, \ldots, c_k \}$. We must construct a $(3,8r)$-biregular bipartite graph $G=(X,Y;E)$ such that $G$ has a locally-balanced 2-partition with an open neighborhood if and only if $f(x_1,\ldots,x_n) = NAE_3(c_1) \; \land \; \cdots \; \land \; NAE_3(c_k)$ formula is satisfiable.

Let us construct a graph $G$ in the following way:
\begin{gather*}
X=\{p_1,\ldots,p_n\},\\
Y=\left\{q_1^1, q_1^2,\ldots, q_1^{2r}, q_2^1, q_2^2, \ldots, q_2^{2r}, \ldots, q_k^1, q_k^2,\ldots  q_k^{2r}\right\},\\
E=\left\{p_iq_j^l:\; 1 \leq i \leq n,\, 1 \leq j \leq k,\, 1 \leq l \leq 2r,\, x_i \in c_j\right\}.
\end{gather*}
It is not hard to see that the graph $G$ can be constructed from $V$ and $C$ in polynomial time, we used only $n+2rk$ vertices. We first suppose that $(\beta_1,\ldots,\beta_n)$ is a true assignment of $f(x_1,\ldots,x_n)$. We show that $G$ has a locally-balanced 2-partition with an open neighborhood. 

Let us define a $2$-partition $\varphi$ of $G$ as follows: for every $w \in V(G)$, let
\begin{equation}\label{myequation1} 
\begin{gathered}
\varphi(w) = \left\{
\begin{tabular}{ll}
$\beta_i$, & if $w=p_i$, where $1 \leq i \leq n$,\\
$l$ mod $2$, & if $w=q_i^l$, where $1 \leq i \leq k$, $1 \leq l \leq 2r$.\\
\end{tabular}%
\right.
\end{gathered}
\end{equation}

Let us show that $\varphi$ is a locally-balanced 2-partition with an open neighborhood. Let us consider vertices of the part $X$. Let $p_i \in X$ $(1 \leq i \leq n )$, where $x_i\in c_{j_1}$, $x_i\in c_{j_2}$, $x_i\in c_{j_3}$, $x_i\in c_{j_4}$, $1 \leq j_1,j_2,j_3,j_4\leq k$. From this and taking into account (\ref{myequation1}), we have 
\[
\#(p_i)_\varphi=\sum_{l=1}^{2r} \varphi^*(q^l_{j_1}) + \sum_{l=1}^{2r} \varphi^*(q^l_{j_2}) + \sum_{l=1}^{2r} \varphi^*(q^l_{j_3}) + \sum_{l=1}^{2r} \varphi^*(q^l_{j_4}) = 0.
\]

Let us now consider the vertices of the part $Y$. Let $q^l_i \in Y$ $(1 \leq i \leq k,\, 1\leq l\leq 2r)$, where $c_i = \{x_{j_1},x_{j_2},x_{j_3}\}$. This means $NAE_3(\beta_{j_1},\beta_{j_2},\beta_{j_3}) = 1$. From this, we get that for 
$1 \leq i \leq k,\, 1\leq l\leq 2$,
\[
|\#(q^l_i)_\varphi|=|\varphi^*(p_{j_1}) + \varphi^*(p_{j_2}) + \varphi^*(p_{j_3})| \leq 1.
\]
Conversely, suppose that $\alpha$ is a locally-balanced 2-partition with an open neighborhood of $G$. Let us define an assignment of $f(x_1,\ldots,x_n)$ as follows: $x_i=\alpha(p_i)$ $(1 \leq i \leq n)$. Let $c_i \in C$ $(1 \leq i \leq k)$ and $c_i = \{x_{j_1},x_{j_2},x_{j_3}\}$. From this and taking into account that $|\#(q^1_i)_\alpha| \leq 1$ and $d_G(q^1_i)=3$, we obtain 
\[
NAE_3(x_{j_1},x_{j_2},x_{j_3})=1,
\]
which implies that $f(x_1,\ldots,x_n)$ is satisfiable. \hfill $\Box$
\end{proof} 

For $i\in \mathbb{N}$, let us define a graph $F_1^i$ (see Fig. 1):

\begin{figure}[h]
\begin{center}
\includegraphics[width=0.5\textwidth]{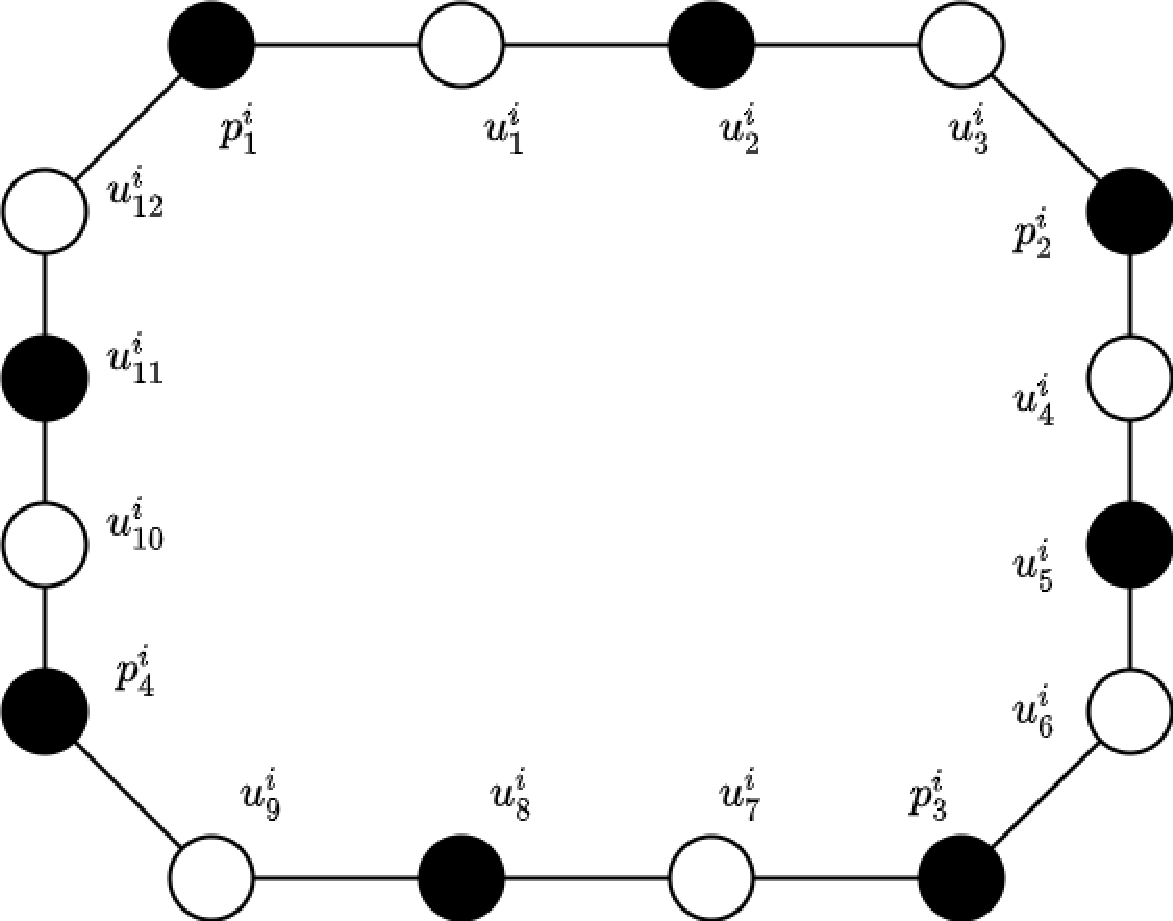}\\
\caption{The graph $F_1^i$.}\label{fig1}
\end{center}
\end{figure}

\begin{lemma}\label{mylemma2}
If a graph $G=(F_1^i \left[p^i_1,p^i_2,p^i_3,p^i_4\right] \cup H)_{E'}$ has $\varphi$ locally-balanced $2$-partition with an open neighborhood, then $\varphi(p^i_1) = \varphi(p^i_2) = \varphi(p^i_3) = \varphi(p^i_4)$.
\end{lemma}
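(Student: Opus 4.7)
The plan is to exploit the structural fact that the edges of $E'$ are incident only to the input vertices $\{p^i_1,p^i_2,p^i_3,p^i_4\}$; hence every non-input vertex $v$ of $F_1^i$ satisfies $N_G(v)=N_{F_1^i}(v)$. Consequently, the locally-balanced condition $|\#(v)_\varphi|\le 1$ at each such internal vertex becomes a constraint involving only $\varphi$-values of vertices of $F_1^i$, so the surrounding graph $H$ and the bridging edges $E'$ play no role in the internal deduction.

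First I would locate all degree-$2$ vertices inside $F_1^i$ and apply Lemma \ref{mylemma} to each; this immediately forces the two $F_1^i$-neighbors of every such vertex to receive opposite $\varphi$-values. Starting from these forced inequalities, I would propagate labels along the internal paths of the gadget of Fig.~\ref{fig1}, using at each non-input vertex $v$ of higher degree the bound $|\#(v)_\varphi|\le 1$, i.e.\ that the multiset of $\varphi$-values on $N_{F_1^i}(v)$ is as balanced as possible. Parity of $d_{F_1^i}(v)$ determines whether $\#(v)_\varphi$ must equal $0$ or lie in $\{-1,+1\}$, and this in turn ties together the values at neighbors.

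The expected outcome of this propagation is that the partial labelling is funneled into four equivalent constraints on the input vertices, each equating $\varphi(p^i_j)$ to the label of a common internal reference vertex of $F_1^i$. From this one concludes $\varphi(p^i_1)=\varphi(p^i_2)=\varphi(p^i_3)=\varphi(p^i_4)$ as required. Equivalently, one can argue by contradiction: assume two inputs receive distinct $\varphi$-values and trace along the gadget to exhibit an internal vertex at which the open-neighborhood imbalance exceeds $1$.

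The main obstacle is that the argument is inherently a figure-dependent case analysis of $F_1^i$: without naming its internal vertices and edges, it can only be described schematically. The delicate point lies with those input vertices whose $G$-neighborhood genuinely mixes internal neighbors in $F_1^i$ with external neighbors reached through $E'$; for each such input $p^i_j$ one must show that the internal subneighborhood alone already determines $\varphi(p^i_j)$ up to the balance constraint at some nearby degree-$2$ or low-degree internal vertex, so that the contribution of edges in $E'$ cannot decouple the inputs from one another. A clean way to handle this is to verify that no matter how $\varphi$ splits the $H$-neighbors of an input, any inequality among the $\varphi(p^i_j)$'s forces a violation of $|\#(v)_\varphi|\le 1$ at an internal vertex of $F_1^i$.
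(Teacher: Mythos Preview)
Your core strategy---apply Lemma~\ref{mylemma} at the internal degree-$2$ vertices of $F_1^i$ and propagate---is exactly what the paper does. However, the complications you anticipate do not arise: $F_1^i$ is simply the $16$-cycle
\[
p_1^i\,u_1^i\,u_2^i\,u_3^i\,p_2^i\,u_4^i\,u_5^i\,u_6^i\,p_3^i\,u_7^i\,u_8^i\,u_9^i\,p_4^i\,u_{10}^i\,u_{11}^i\,u_{12}^i\,p_1^i,
\]
so every internal vertex $u_j^i$ has degree exactly $2$ in $G$. There are no higher-degree internal vertices to analyze, no need for a contradiction argument, and the balance condition at the inputs $p_l^i$ themselves is never invoked---hence your ``delicate point'' about external neighbors in $E'$ is a non-issue. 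The paper's proof is two lines: Lemma~\ref{mylemma} at $u_1^i$ (neighbors $p_1^i,u_2^i$) gives $\varphi(p_1^i)\neq\varphi(u_2^i)$, and at $u_3^i$ (neighbors $u_2^i,p_2^i$) gives $\varphi(u_2^i)\neq\varphi(p_2^i)$, whence $\varphi(p_1^i)=\varphi(p_2^i)$; the remaining equalities follow identically around the cycle.
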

\begin{proof}
Suppose that $\varphi$ is a locally-balanced 2-partition with an open neighborhood of $G$. By Lemma \ref{mylemma}, $\varphi(p^i_1) \neq \varphi(u^i_2)$ and $\varphi(u^i_2) \neq \varphi(p^i_2)$, hence $\varphi(p^i_1) = \varphi(p^i_2)$. Using the same method, we can prove that $\varphi(p^i_1) = \varphi(p^i_2) = \varphi(p^i_3) = \varphi(p^i_4)$. \hfill $\Box$
\end{proof}

As it was discussed before, the problem of the existence of locally-balanced $2$-partition with an open neighborhood of $(2,4)$-biregular bipartite graphs can be solved in polynomial time. Nevertheless, our next complexity result concerns even bipartite graphs and shows that the same problem is $NP$-complete for bipartite graphs where each vertex has degree either $2$ or $4$. Let us now consider the following 
\begin{problem}\label{myproblem3}
 
\textit{Instance}: An even bipartite graph $G$ with $\Delta(G)=4$.

\textit{Question}: Does $G$ have a locally-balanced $2$-partition with an open neighborhood?
\end{problem}

\begin{theorem}\label{mytheorem3}
Problem \ref{myproblem3} is $NP$-complete.
\end{theorem}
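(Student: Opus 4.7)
The plan is a polynomial-time reduction from NAE-3-Sat-E4 (Problem~\ref{myproblem1}), which is $NP$-complete by Theorem~\ref{mytheorem1}. Given an instance $\mathcal{I}=(V,C)$ with $V=\{x_1,\ldots,x_n\}$ and $C=\{c_1,\ldots,c_k\}$ in which each variable occurs in exactly four clauses, I will build in polynomial time an even bipartite graph $G$ with $\Delta(G)=4$ that admits a locally-balanced $2$-partition with an open neighborhood if and only if the formula $NAE_3(c_1)\wedge\cdots\wedge NAE_3(c_k)$ is satisfiable. Membership in NP is immediate, since the balance inequality at every vertex can be checked in polynomial time.

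For each variable $x_i$ I would attach a fresh copy of the reduction element $F_1^i$ with input set $\{p_1^i,p_2^i,p_3^i,p_4^i\}$. By Lemma~\ref{mylemma2}, every locally-balanced $2$-partition with an open neighborhood of $G$ assigns a single common colour to these four inputs, and I shall interpret that colour as the truth value of $x_i$. The four occurrences of $x_i$ in $C$ are routed through these four inputs, one external edge per input. I will design $F_1^i$ so that only degrees $2$ and $4$ occur inside it (the internal vertices being the degree-$2$ chain elements already appearing in the proof of Lemma~\ref{mylemma2}, and each input having internal degree $3$ which together with its one external edge yields total degree $4$), and so that a valid bipartition of the whole block can be chosen consistently with the clause gadgets below.

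For each clause $c_j=\{x_a,x_b,x_c\}$ I would introduce a clause vertex $q_j$ of degree $4$ on the opposite side of the bipartition, adjacent to the three variable-inputs routed from the gadgets of $x_a,x_b,x_c$ and to one slack vertex $d_j$; the balance condition $\#(q_j)_\varphi=0$ then requires exactly two of $q_j$'s four neighbours to be coloured $0$, which, given the freedom of $\varphi(d_j)$, is equivalent to the three literal-neighbours being not-all-equal, i.e.\ to $NAE_3(c_j)=1$. To give $d_j$ even degree while preserving the freedom of its colour, I would embed it in a small private auxiliary subgraph of degree-$2$ vertices for which Lemma~\ref{mylemma} merely forces colour alternation along the auxiliary edges without fixing $\varphi(d_j)$. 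In the forward direction a satisfying NAE assignment $(\beta_1,\ldots,\beta_n)$ then induces a valid $2$-partition: colour each $p_l^i$ by $\beta_i$, extend through every $F_1^i$ as permitted by Lemma~\ref{mylemma2}, select $\varphi(d_j)$ so that exactly two of $q_j$'s neighbours are $0$ (which is always possible thanks to NAE), and alternate colours through the auxiliary subgraph; in the converse direction the common colour of the $p_l^i$'s is read off as a truth assignment, and the balance at each $q_j$ together with the freedom of $d_j$ forces $NAE_3(c_j)=1$.

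The step I expect to be the main obstacle is the simultaneous enforcement of bipartiteness, even-degree, and max-degree-$4$ inside the variable gadget $F_1^i$ and around each slack vertex $d_j$: $F_1^i$ must force equality of four inputs using only degrees $2$ and $4$ while remaining bipartite (this is the abstract content of Lemma~\ref{mylemma2}), and each $d_j$ must be embedded so that its degree becomes even, its colour stays flexible, and no unintended constraints couple distinct clause gadgets. Converting these requirements into an explicit polynomial-size layout demands careful local bookkeeping at every vertex and a consistent choice of bipartition classes throughout; once the layout is fixed, however, the correctness argument sketched in the previous paragraph is routine.
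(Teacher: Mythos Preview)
Your high-level strategy---reduce from NAE-3-Sat-E4, use a copy of $F_1^i$ per variable so that Lemma~\ref{mylemma2} forces $\varphi(p_1^i)=\cdots=\varphi(p_4^i)$, and attach a degree-$4$ clause vertex whose balance encodes $NAE_3$ via a slack neighbour---is exactly the paper's. Where you diverge is in the concrete wiring, and that is also where your sketch is incomplete.

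The paper's $F_1^i$ is simply the $16$-cycle $p_1^i u_1^i u_2^i u_3^i p_2^i u_4^i\cdots u_{12}^i p_1^i$: every vertex, including the inputs, has internal degree~$2$. There is no need to engineer a bipartite even-degree gadget in which the inputs have internal degree~$3$; the paper instead introduces \emph{two} clause vertices $q_j^1,q_j^2$ per clause, connects each relevant input $p_t^i$ to \emph{both} of them, and adds a single pendant $v_j$ adjacent to $q_j^1$ and $q_j^2$. This buys three things at once: each $p_t^i$ has total degree $2+2=4$; the colours $\varphi(q_j^1)=1$, $\varphi(q_j^2)=0$ make the external contribution at every input vanish, decoupling the balance at $p_t^i$ from the clause side entirely; and the slack vertex $v_j$ already has degree~$2$, so no auxiliary subgraph is needed. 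The ``main obstacle'' you anticipate simply disappears.

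By contrast, your single-clause-vertex layout forces each $p_t^i$ to see only one external neighbour $q_j$, so with internal degree~$3$ the balance at $p_t^i$ ties the internal $u$-colours to $\varphi(q_j)$. Since one $F_1^i$ serves four different clauses, you would have to argue that the internal degree-$2$ chains can be recoloured independently to absorb four arbitrary values $\varphi(q_{j_1}),\ldots,\varphi(q_{j_4})$; you have not done this, and it is not obvious for any gadget that still satisfies Lemma~\ref{mylemma2}. Likewise, ``embed $d_j$ in a small private auxiliary subgraph of degree-$2$ vertices'' is not yet a construction: a path of degree-$2$ vertices has two endpoints of degree~$1$, and closing it into a cycle through $d_j$ pins $\varphi(d_j)$ via Lemma~\ref{mylemma}. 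The paper's duplication trick is precisely what resolves both issues at once.
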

\begin{proof}
It is easy to see that Problem 3 is in $NP$. For the proof of the $NP$-completeness, we show a reduction from Problem 1 to Problem 3. Let $\mathcal{I}=(V,C)$ be an instance of Problem 1, where $V = \{x_1, \ldots, x_n\}$ and $C = \{c_1, \ldots, c_k \}$. We must construct an even bipartite graph $G$ such that it has a locally-balanced 2-partition with an open neighborhood if and only if $f(x_1,\ldots,x_n)=NAE_3(c_1) \; \land \; \cdots \; \land \; NAE_3(c_k)$ formula is satisfiable.

Let us construct a graph $G$ in such a way:
\begin{gather*}
V(G) = \left(\bigcup\limits_{i=1}^{n}V(F_1^i \left[p_1^i,p_2^i,p_3^i,p_4^i \right])\right) \cup \left\{q_1^1,q_1^2, q_2^1, q_2^2 \ldots, q_k^1, q_k^2\right\} \cup \{v_1,v_2, \ldots, v_k\},\\
E(G)=\{p_{t}^{i}q_j^l:\; 1 \leq i \leq n,\, 1 \leq j \leq k,\, 1 \leq l \leq 2,\, x_i \in c_j \text{ and}\;\\
\text{it is $x_i$'s $t$-th appearance in the formula $1 \leq t \leq k$}\}\cup \\
\cup \left( \bigcup\limits_{i=1}^{n}E(F_1^i \left[p_1^i,p_2^i,p_3^i,p_4^i\right])\right) \cup \left\{q_i^jv_i:\; 1 \leq i \leq k,\, 1 \leq j \leq 2\right\}.
\end{gather*}

The graph $G$ has $16n+3k$ vertices, so it can be constructed from $V$ and $C$ in polynomial time. Clearly, $G$ is an even bipartite graph. Since each clause contains exactly three distinct literals and $\Delta(F_1^i \left[p_1^i,p_2^i,p_3^i,p_4^i \right])=2$, we have $\Delta(G)=4$. Suppose that $(\beta_1,\ldots,\beta_n)$ is a true assignment of $f(x_1,\ldots,x_n)$. We show that $G$ has a locally-balanced $2$-partition with an open neighborhood. Let us define a $2$-partition $\varphi$ of $G$ by two steps as follows:
\begin{enumerate}
    \item For any $w \in V(G)\; (w \neq v_i)$, 
    \begin{equation}\label{myequation2}
\begin{gathered}
 \quad \varphi(w) = \left\{
\begin{tabular}{lll}
$\beta_i$, & if $w=p^i_l$,& where $1 \leq i \leq n, \, 1 \leq l \leq 4$,\\
$\beta_i$, & if $w=u^i_{3l-2}$, & where $1 \leq i \leq n, \, 1 \leq l \leq 4$,\\
$1-\beta_i$, & if $w=u^i_{3l}$, & where $1 \leq i \leq n, \, 1 \leq l \leq 4$,\\
$1-\beta_i$, & if $w=u^i_{3l-1}$, & where $1 \leq i \leq n, \, 1 \leq l \leq 4$,\\
$1$, & if $w=q_i^1$, & where $1 \leq i \leq k$,\\
$0$, & if $w=q_i^2$, & where $1 \leq i \leq k$.\\
\end{tabular}%
\right.
\end{gathered}
\end{equation}
    \item For any  $v_i \in V(G)$ $(1 \leq i \leq k)$, 
    \[ 
        \varphi^*(v_i) = - \sum_{w\in N_G(q_i) \setminus \{v_i\}}\varphi^*(w).
    \]
\end{enumerate}

Let us show that $\varphi$ is a locally-balanced $2$-partition with an open neighborhood.

\textbf{(a)} If $p^i_l \in V(G)$ $(1 \leq l \leq 4)$, then, let $x_i \in c_j$ and it is $x_i$'s $l$-th appearance in the formula $(1 \leq j \leq k)$. From this and taking into account (\ref{myequation2}), we have
\begin{gather*}
\#(p^i_l)_\varphi= \varphi^*(q^1_{j}) + \varphi^*(q^2_{j}) + \varphi^*(u^i_{3l-2}) + \varphi^*(u^i_{(3(l-1) - 1)\bmod 12 + 1})= 0.
\end{gather*}

\textbf{(b)} If $u^i_{3l-2} \in V(G)$ $(1 \leq l \leq 4)$, then, by (\ref{myequation2}), we have
\[
\#(u^i_{3l-2})_\varphi= \varphi^*(p^i_{l}) + \varphi^*(u^i_{3l-1})= 0.
\]

\textbf{(c)} If $u^i_{3l-1} \in V(G)$ $(1 \leq l \leq 4)$, then, by (\ref{myequation2}), we obtain
\[
\#(u^i_{3l-1})_\varphi= \varphi^*(u^i_{3l-2}) + \varphi^*(u^i_{3l})= 0.
\]

\textbf{(d)} If $u^i_{3l} \in V(G)$ $(1 \leq l \leq 4)$, then, by (\ref{myequation2}), we get
\[
\#(u^i_{3l})_\varphi= \varphi^*(u^i_{3l-1}) + \varphi^*(p^i_{(l\bmod 4) + 1})= 0.
\]

\textbf{(e)} If $q^l_i \in V(G)$ $(1 \leq i \leq k,\, 1\leq l\leq 2)$, then, by definition of $\varphi$, we get
\begin{gather*}
    \#(q^l_i)_\varphi = \sum_{w\in N_G(q_i) \setminus \{v_i\}}\varphi^*(w) + \varphi^*(v_i) = \\
    \sum_{w\in N_G(q_i) \setminus \{v_i\}}\varphi^*(w) - \sum_{w\in N_G(q_i) \setminus \{v_i\}}\varphi^*(w)  = 0.
\end{gather*}

\textbf{(f)} If $v_i \in V(G)$ $(1 \leq i \leq k)$, then, by (\ref{myequation2}), we obtain
\[
\#(v_i)_\varphi =  \varphi^*(q^1_i) + \varphi^*(q^2_i) = 0.
\]

Conversely, suppose that $\alpha$ is a locally-balanced 2-partition with an open neighborhood of $G$. By Lemma \ref{mylemma2}, we have $\alpha(p^i_1)=\alpha(p^i_2)=\alpha(p^i_3)=\alpha(p^i_4)$. Let us define an assignment of $f(x_1,\ldots,x_n)$ as follows: $x_i=\alpha(p^i_1)$ $(1 \leq i \leq n)$. Let $c_i \in C$ $(1 \leq i \leq k)$ and $c_i = \{x_{j_1},x_{j_2},x_{j_3}\}$. From this and taking into account that $|\#(q^1_i)_\alpha| \leq 1$, $\alpha(p^i_1)=\alpha(p^i_2)=\alpha(p^i_3)=\alpha(p^i_4)$ and $d_{G-v_{i}}(q^1_i)=3$, we get 

\[
NAE_3(x_{j_1},x_{j_2},x_{j_3})=1,
\]
which implies that $f(x_1,\ldots,x_n)$ is satisfiable. \hfill $\Box$
\end{proof}

\section{Complexity results on locally-balanced 2-partitions with a closed neighborhood}
\label{NPC:Closed}

In this section we consider the problem of the existence of locally-balanced $2$-partitions with a closed neighborhood of some restricted classes of graphs. 
In \cite{BalKamNP2}, Balikyan and Kamalian proved that the problem of existence of locally-balanced $2$-partition with a closed neighborhood of bipartite graphs with maximum degree $4$ is $NP$-complete, and later they conjectured that the problem remains $NP$-complete even for subcubic bipartite graphs. In this section we confirm the conjecture. We begin our considerations with some auxiliary lemmas.

\begin{lemma}\label{mylemma3}
If $\varphi$ is a locally-balanced $2$-partition with a closed neighborhood of graph $G=(V,E)$, then for every $v \in V(G)$ with $d_G(v)=1$, $\varphi(u) \neq \varphi(v),\;$ where $uv \in E(G)$.
\end{lemma}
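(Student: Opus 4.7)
The plan is to mimic the short argument used for Lemma \ref{mylemma}, but adapted to the closed-neighborhood setting. The key observation is that when $d_G(v)=1$ with unique neighbor $u$, the closed neighborhood $N_G[v]=\{v,u\}$ contains exactly two vertices, so the partition values on this set are very constrained by the balance condition $\bigl|\#[v]_\varphi\bigr|\leq 1$.

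First I would write out $\#[v]_\varphi$ explicitly. Since $N_G[v]=\{v,u\}$, one has
\[
\#[v]_\varphi = \bigl|\{w\in\{v,u\}\colon \varphi(w)=0\}\bigr| - \bigl|\{w\in\{v,u\}\colon \varphi(w)=1\}\bigr|.
\]
Next I would do the trivial case analysis. If $\varphi(v)=\varphi(u)=0$, then $\#[v]_\varphi = 2$; if $\varphi(v)=\varphi(u)=1$, then $\#[v]_\varphi = -2$. In either case $\bigl|\#[v]_\varphi\bigr|=2>1$, contradicting the fact that $\varphi$ is locally-balanced with a closed neighborhood. Hence $\varphi(v)\neq \varphi(u)$, which is what was to be shown.

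There is really no obstacle here: the entire content of the lemma is the observation that on a two-element closed neighborhood the balance condition forces the two endpoints to receive different values. The only thing to be careful about is not to confuse the open-neighborhood version (Lemma \ref{mylemma}), which for degree-$1$ vertices would be vacuous, with the closed-neighborhood version used here, which genuinely constrains the pendant edge. The proof is just a two-line case check after writing out $\#[v]_\varphi$.
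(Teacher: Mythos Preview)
Your proof is correct and essentially the same as the paper's: both exploit that $N_G[v]=\{v,u\}$ has size two, so the locally-balanced condition $\bigl|\#[v]_\varphi\bigr|\le 1$ forces $\#[v]_\varphi=0$, i.e., $\varphi(v)\neq\varphi(u)$. The paper states this directly via the equality of the two counts, whereas you phrase it as a two-case contradiction, but the content is identical.
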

\begin{proof}
Suppose that $\varphi$ is a locally-balanced $2$-partition with a closed neighborhood of $G$. Let us consider a vertex $v \in V(G)$ with $d_G(v)=1$. Then
\[
 \#[v]_\varphi = 0 = |\{ u: \; u\in N_G[v] \; and \; \varphi(u) = 0\} - \{ u: \; u\in N_G[v] \; and \; \varphi(u)=1\}|.
\]
Hence,
\[ 
|\{ u: \; u\in N_G[v] \; and \; \varphi(u)=0\} = \{ u: \; u\in N_G[v] \; and \; \varphi(u)=1\}|,
\]

which implies that $\varphi(v) \neq \varphi(u)$. \hfill $\Box$
\end{proof}

Let us define a graph $F_2$ (see Fig. 2):

\begin{figure}[h]
\begin{center}
\includegraphics[width=0.30\textwidth]{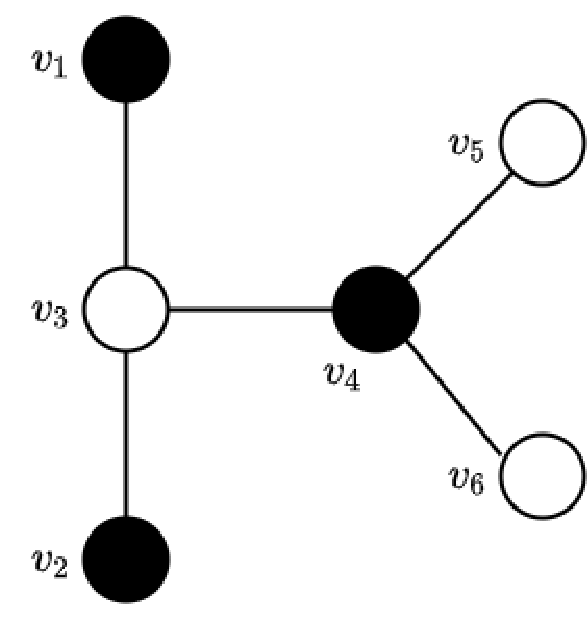}\\
\caption{The graph $F_2$.}\label{fig2}
\end{center}
\end{figure}

\begin{lemma}\label{mylemma4}
\textit{If a graph $F_2$ has $\varphi$ locally-balanced $2$-partition with a closed neighborhood, then $\varphi(v_1) = \varphi(v_2) = \varphi(v_5) = \varphi(v_6)$ and $\varphi(v_3) = \varphi(v_4) = 1 - \varphi(v_1)$. }
\end{lemma}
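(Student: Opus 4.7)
Based on the structure of $F_2$ displayed in Figure~\ref{fig2} (two claws glued at their centres: $v_1, v_2$ are pendants adjacent only to $v_3$; $v_5, v_6$ are pendants adjacent only to $v_4$; and $v_3v_4$ is the central edge), my plan is to first pin down the four leaves via Lemma~\ref{mylemma3} and then use a single application of the closed-neighborhood balance condition at $v_3$ to synchronise the two centres.

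The first step is essentially immediate. Each of $v_1, v_2, v_5, v_6$ has degree~$1$, and Lemma~\ref{mylemma3} forces a leaf to receive the colour opposite to that of its unique neighbour in any locally-balanced $2$-partition with a closed neighborhood. Hence $\varphi(v_1) = \varphi(v_2) = 1 - \varphi(v_3)$ and $\varphi(v_5) = \varphi(v_6) = 1 - \varphi(v_4)$, which already establishes all of the claimed equalities \emph{except} the one between the two centres, namely $\varphi(v_3) = \varphi(v_4)$.

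For the second step I would examine $N_{F_2}[v_3] = \{v_1, v_2, v_3, v_4\}$. Writing $a = \varphi(v_3)$, the previous paragraph gives that $v_1$ and $v_2$ each contribute the colour $1-a$, while $v_3$ itself contributes $a$. Thus $\#[v_3]_\varphi$ is either $0$ if $\varphi(v_4) = a$, or $\pm 2$ if $\varphi(v_4) = 1-a$. Since $\lvert \#[v_3]_\varphi \rvert \leq 1$ by hypothesis, the only admissible possibility is $\varphi(v_4) = a = \varphi(v_3)$, and the lemma follows (a symmetric argument at $v_4$ would give the same conclusion).

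The only real content is the observation that, once Lemma~\ref{mylemma3} has been used on the four leaves, $v_4$ is the unique "free" vertex of $N_{F_2}[v_3]$, so the balance inequality at $v_3$ pins it down uniquely. Beyond that, nothing is needed other than unfolding definitions, so I do not foresee a significant obstacle.
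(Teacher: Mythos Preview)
Your argument is correct and runs along the same lines as the paper's: apply Lemma~\ref{mylemma3} to pendants and then use the closed-neighborhood balance at a degree-$3$ centre to force the remaining value. The only difference is the order of deductions. The paper invokes Lemma~\ref{mylemma3} at $v_5,v_6$ only, then uses $\#[v_4]_\varphi=0$ to get $\varphi(v_3)=\varphi(v_4)$, and finally $\#[v_3]_\varphi=0$ to obtain $\varphi(v_1)=\varphi(v_2)=1-\varphi(v_3)$; you instead apply Lemma~\ref{mylemma3} to all four leaves up front and then need only the balance at $v_3$. Both are fine for $F_2$ as a standalone graph. One small advantage of the paper's ordering is that it never uses the degree-$1$ hypothesis on $v_1,v_2$: the conclusion is derived solely from the constraints at $v_3,v_4,v_5,v_6$. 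This is exactly what is needed later in Lemma~\ref{mylemma5}, where copies of $F_2$ sit inside $F_3^i$ and the $v_1,v_2$-type vertices acquire further neighbours; the paper's argument transfers verbatim to that setting, whereas yours would need to be rephrased.
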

\begin{proof}
Suppose that $\varphi$ is a locally-balanced $2$-partition with a closed neighborhood of $G$. By Lemma \ref{mylemma3}, we obtain
\begin{gather*}
\varphi(v_5) \neq \varphi(v_4), \\
\varphi(v_6) \neq \varphi(v_4).
\end{gather*}
Hence 
\begin{equation}\label{myequation3}
    \varphi(v_5) = \varphi(v_6).
\end{equation}

Since $d_G(v_4)$ is odd and $\varphi$ is a locally-balanced $2$-partition with a closed neighborhood, we obtain $\#[v_4]_\varphi=0$. From this and taking into account (\ref{myequation3}), we have
\begin{equation}\label{myequation4}
    \varphi(v_4) = \varphi(v_3) \neq \varphi(v_5).
\end{equation}

Using the same assumption for $v_3$ and taking into account (\ref{myequation4}), we get
\[
    \varphi(v_1) = \varphi(v_2) \neq \varphi(v_3).
\] \hfill $\Box$
\end{proof}

For $i\in \mathbf{N}$, let us define a graph $F_3^i$ (see Fig. 3):

\begin{figure}[h]
\begin{center}
\includegraphics[width=0.90\textwidth]{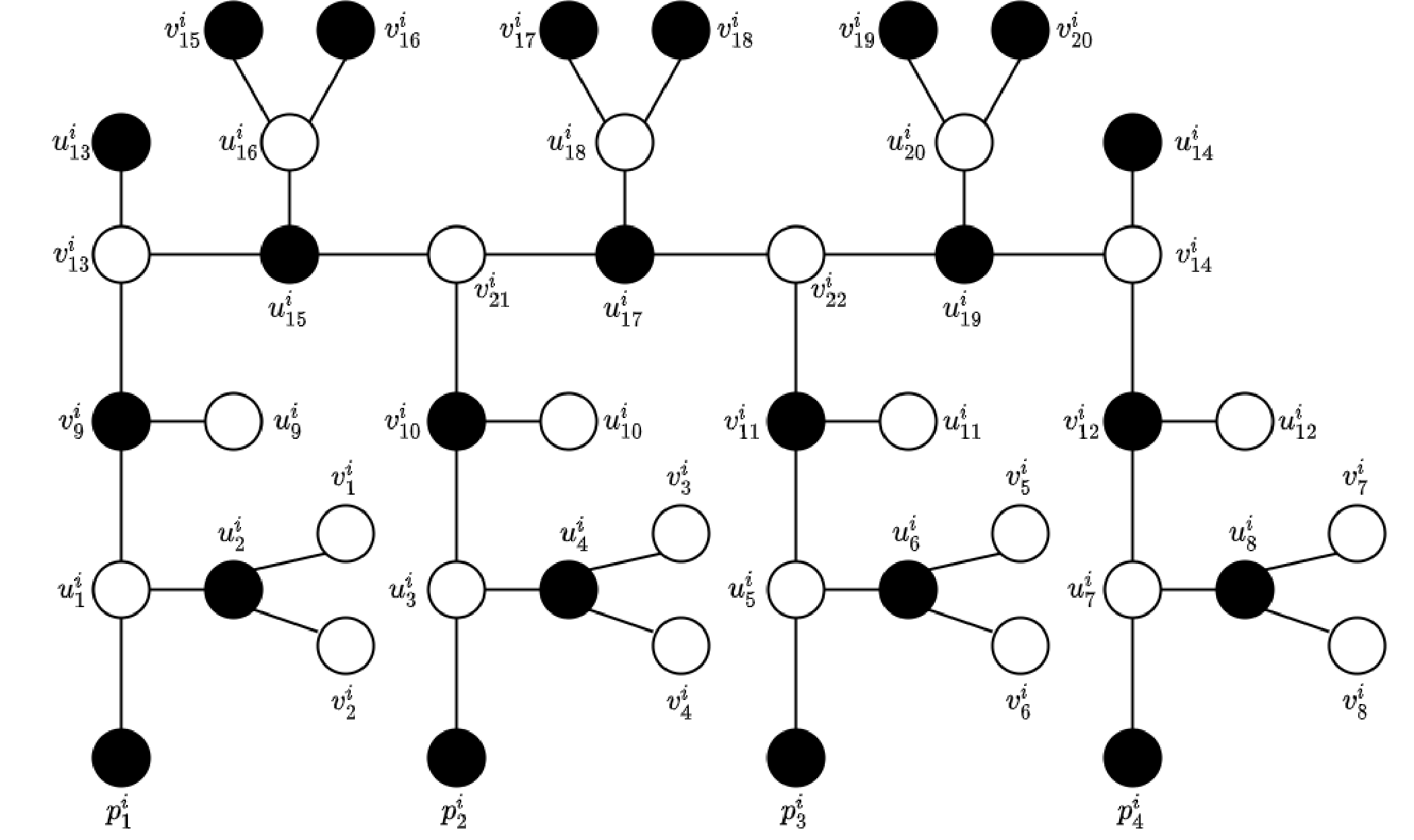}\\
\caption{The graph $F_3^i$.}\label{fig3}
\end{center}
\end{figure}

\begin{lemma}\label{mylemma5}
If a graph $G=(F_3^i \left[p^i_1,p^i_2,p^i_3,p^i_4 \right] \cup H)_{E'}$ has $\varphi$ locally-balanced 2-partition with a closed neighborhood, then $\varphi(p^i_1) = \varphi(p^i_2) = \varphi(p^i_3) = \varphi(p^i_4)$.
\end{lemma}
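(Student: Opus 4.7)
The plan is to reduce Lemma \ref{mylemma5} to repeated applications of Lemma \ref{mylemma4}. Based on the construction of $F_3^i$ in Figure 3 and the pattern established by Lemma \ref{mylemma2} (which chains together copies of a forcing gadget along the input vertices), I expect $F_3^i$ to be built by concatenating several copies of $F_2$ so that consecutive input vertices among $p_1^i,p_2^i,p_3^i,p_4^i$ play the role of the paired endpoints $(v_1,v_2)$ or $(v_5,v_6)$ in some embedded copy of $F_2$. The key observation for the reduction to work is that $E'$ is allowed to add edges only from the input vertices $p_1^i,\dots,p_4^i$ to vertices of $H$; all other vertices of $F_3^i$ keep exactly the same closed neighborhoods in $G$ as they have in $F_3^i$.

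First I would fix an arbitrary locally-balanced $2$-partition $\varphi$ with a closed neighborhood of $G$ and restrict attention to a single embedded copy of $F_2$ inside $F_3^i$. Since every internal vertex of such a copy (corresponding to $v_3,v_4$ and, when applicable, $v_5,v_6$ of Lemma \ref{mylemma4}) has its $G$-neighborhood equal to its $F_2$-neighborhood, the proof of Lemma \ref{mylemma4} carries over verbatim. Therefore, for each such embedded $F_2$, the two $p$-vertices that serve as its forced pair must receive the same value under $\varphi$.

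Then I would apply this observation to the three (or more) copies of $F_2$ that successively link $p_1^i$ with $p_2^i$, $p_2^i$ with $p_3^i$, and $p_3^i$ with $p_4^i$, obtaining $\varphi(p_1^i)=\varphi(p_2^i)$, $\varphi(p_2^i)=\varphi(p_3^i)$, and $\varphi(p_3^i)=\varphi(p_4^i)$. Chaining these equalities yields the desired conclusion $\varphi(p_1^i)=\varphi(p_2^i)=\varphi(p_3^i)=\varphi(p_4^i)$.

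The main obstacle is a bookkeeping one rather than a conceptual one: I must verify carefully, using the definition of $G=(F_3^i[p_1^i,p_2^i,p_3^i,p_4^i]\cup H)_{E'}$, that no edge added by $E'$ disturbs the closed neighborhoods of the interior vertices of any embedded $F_2$; otherwise the pendant argument and the parity computation of Lemma \ref{mylemma4} (which relied on $d(v_4)$ being odd and on $v_5,v_6$ being pendants attached to $v_4$) could fail. Because $E'$ touches only the $p$-vertices, this verification reduces to a direct inspection of Figure 3 showing that the vertices playing the roles of $v_3,v_4,v_5,v_6$ in each embedded $F_2$ are internal to $F_3^i$ and thus untouched by $E'$.
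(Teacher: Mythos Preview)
Your high-level plan---reduce to repeated applications of Lemma~\ref{mylemma4} by locating embedded copies of $F_2$ whose internal vertices are untouched by $E'$---is exactly the strategy the paper uses. But your structural guess about \emph{how} those copies sit inside $F_3^i$ is wrong, and this matters for the argument.

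In the actual gadget $F_3^i$ (which has $46$ vertices: the four $p^i_j$, twenty-two $v^i_l$, and twenty $u^i_l$), no two of $p^i_1,\dots,p^i_4$ ever appear together as the paired outputs of a single embedded $F_2$. Instead, each $p^i_j$ sits in its own copy of $F_2$ alongside some internal vertex $v^i_m$ (for instance, the first application of Lemma~\ref{mylemma4} in the paper yields $\varphi(p^i_1)=\varphi(v^i_9)$, not $\varphi(p^i_1)=\varphi(p^i_2)$). The equality $\varphi(p^i_1)=\varphi(p^i_2)$ is then obtained only indirectly, by propagating the value of $v^i_9$ through a chain of roughly ten further applications of Lemma~\ref{mylemma3} and Lemma~\ref{mylemma4} to intermediate vertices $v^i_{13},v^i_{15},\dots,v^i_{22},v^i_{10}$ before reaching the $F_2$ copy that contains $p^i_2$. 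So your ``three copies of $F_2$ linking consecutive $p$-vertices'' picture does not match the figure, and the short chain you propose would not exist to invoke.

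Two further points you should incorporate. First, the propagation in the paper is not purely via Lemma~\ref{mylemma4}: several steps use Lemma~\ref{mylemma3} directly on pendant vertices $u^i_l$ to pin down $\varphi(v^i_m)$ for certain intermediate $v$-vertices that are not part of any $F_2$ copy (e.g.\ $u^i_9,u^i_{10},u^i_{13},u^i_{14}$). Second, your observation that Lemma~\ref{mylemma4} transfers to an embedded $F_2$ provided the vertices playing the roles of $v_3,v_4,v_5,v_6$ have no extra neighbours is correct and is precisely what the paper relies on; you simply need to trace the longer chain through the actual vertex labels of Figure~3 rather than the hypothetical three-link chain you posited.
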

\begin{proof}
Suppose that $\varphi$ is a locally-balanced 2-partition with a closed neighborhood of $G$. Without loss of generality we may assume that $\varphi(u^i_1)=1$. By Lemma \ref{mylemma4}, we obtain

\begin{equation}\label{myequation5}
\begin{gathered}
\varphi(u^i_2) = \varphi(u^i_1) = 1, \\
\varphi(v^i_1) = \varphi(v^i_2) = \varphi(p^i_1) = \varphi(v^i_9) = 0.
\end{gathered}
\end{equation}
By Lemma \ref{mylemma3} and taking into account (\ref{myequation5}), we have
\begin{equation}\label{myequation6}
    \varphi(u^i_9) = 1 - \varphi(v^i_9) = 1.
\end{equation}
From (\ref{myequation5}) and (\ref{myequation6}), we get
\begin{equation}\label{myequation7}
    \varphi(v^i_{13}) = \varphi(v^i_9) = 0.
\end{equation}
By Lemma \ref{mylemma3} and taking into account (\ref{myequation7}), we get
\begin{equation}\label{myequation8}
    \varphi(u^i_{13}) = 1 - \varphi(v^i_{13}) = 1. 
\end{equation}
By Lemma \ref{mylemma4} and (\ref{myequation8}), we have
\begin{equation}\label{myequation9}
\begin{gathered}
\varphi(u^i_{15}) = \varphi(u^i_{16}) = 1,\\
\varphi(v^i_{15}) = \varphi(v^i_{16}) = \varphi(v^i_{21}) = 0.
\end{gathered}
\end{equation}
By Lemma \ref{mylemma4} and taking into account (\ref{myequation9}), we obtain
\begin{equation}\label{myequation10}
\begin{gathered}
\varphi(u^i_{17}) = \varphi(u^i_{18}) = 1.\\
\varphi(v^i_{17}) = \varphi(v^i_{18}) = \varphi(v^i_{22}) = 0.
\end{gathered}
\end{equation}
By Lemma \ref{mylemma4} and (\ref{myequation10}), we have
\begin{equation}\label{myequation11}
\begin{gathered}
\varphi(u^i_{19}) = \varphi(u^i_{20}) = 1,\\
\varphi(v^i_{14}) = \varphi(v^i_{19}) = \varphi(v^i_{20}) = 0.
\end{gathered}
\end{equation}
From (\ref{myequation9}) and (\ref{myequation10}), we get
\begin{equation}\label{myequation12}
    \varphi(v^i_{21}) = \varphi(v^i_{10}) = 0.
\end{equation}
By Lemma \ref{mylemma3} and taking into account (\ref{myequation12}), we obtain
\begin{equation}\label{myequation13}
    \varphi(u^i_{10}) = 1 - \varphi(v^i_{10}) = 1.
\end{equation}
By Lemma \ref{mylemma4} and taking into account (\ref{myequation13}), we obtain
\begin{equation}\label{myequation14}
\begin{gathered}
\varphi(u^i_3) = \varphi(u^i_4) = 1,\\
\varphi(v^i_3) = \varphi(v^i_4) = \varphi(p^i_2) = 0.
\end{gathered}
\end{equation}
From (\ref{myequation10}) and (\ref{myequation11}), we get
\begin{equation}\label{myequation15}
    \varphi(v^i_{11}) = \varphi(v^i_{22}) = 0.
\end{equation}
By Lemma \ref{mylemma3} and taking into account (\ref{myequation15}), we have
\begin{equation}\label{myequation16}
    \varphi(u^i_{11}) = 1 - \varphi(v^i_{11}) = 1.
\end{equation}
By Lemma \ref{mylemma4} and (\ref{myequation16}), we obtain
\begin{equation}\label{myequation17}
\begin{gathered}
\varphi(u^i_5) = \varphi(u^i_6) = 1,\\
\varphi(v^i_5) = \varphi(v^i_6) = \varphi(p^i_3) = 0.
\end{gathered}
\end{equation}
By Lemma \ref{mylemma3} and (\ref{myequation11}), we have
\begin{equation}\label{myequation18}
    \varphi(u^i_{14}) = 1 - \varphi(v^i_{14}) = 1.
\end{equation}
From (\ref{myequation11}) and (\ref{myequation18}), we get
\begin{equation}\label{myequation19}
    \varphi(v^i_{12}) = \varphi(v^i_{14}) = 0.
\end{equation}
By Lemma \ref{mylemma3} and (\ref{myequation19}), we have
\begin{equation}\label{myequation20}
    \varphi(u^i_{12}) = 1 - \varphi(v^i_{12}) = 1.
\end{equation}
By Lemma \ref{mylemma4} and taking into account (\ref{myequation20}), we obtain
\begin{equation}\label{myequation21}
\begin{gathered}
\varphi(u^i_7) = \varphi(u^i_8) = 1,\\
\varphi(v^i_7) = \varphi(v^i_8) = \varphi(p^i_4) = 0.
\end{gathered}
\end{equation}
By (\ref{myequation5}), (\ref{myequation14}), (\ref{myequation17}) and (\ref{myequation21}), we have
\[
    \varphi(p^i_1) = \varphi(p^i_2) = \varphi(p^i_3) = \varphi(p^i_4) = 0.
\] \hfill $\Box$
\end{proof}

From the proof of Lemma \ref{mylemma5}, it follows the following result.
\begin{lemma}\label{mylemma6}
If $\varphi$ is a $2$-partition of a graph $G=(F_3^i \left[p^i_1,p^i_2,p^i_3,p^i_4 \right] \cup H)_{E'}$, $\varphi(p^i_j)=\varphi(v^i_l)$ $(1 \leq j \leq 4, \; 1 \leq l \leq 22)$ and $\varphi(u^i_j) = 1 - \varphi(p^i_1)$ $(1 \leq j \leq 20)$, then $\#[v^i_j]_\varphi = 0$ $(1 \leq j \leq 22)$, $\#[u^i_j]_\varphi = 0$ $(1 \leq j \leq 20)$ and $\#[p^i_j]_\varphi = \sum_{w \in \{a\;:\; ap^i_j\in E'\}}\varphi^*(w)$ $(1 \leq j \leq 4)$.
\end{lemma}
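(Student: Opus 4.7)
The plan is to prove the lemma by a direct, vertex-by-vertex verification, inspecting the structure of $F_3^i$ shown in Fig. 3. The point is that Lemma \ref{mylemma6} is the ``forward'' companion of Lemma \ref{mylemma5}: each of the equations \eqref{myequation5}--\eqref{myequation21} in the proof of Lemma \ref{mylemma5} is precisely a closed-neighbourhood balance equation at some vertex of $F_3^i$, and the coloring hypothesised here is exactly the one satisfying all those equations simultaneously. So once the partition is assumed as stated, the balance identities are routine to check.

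Without loss of generality I would assume $\varphi(p^i_1) = 0$, so that every $p^i_j$ and every $v^i_l$ has value $0$ and every $u^i_j$ has value $1$. The graph $F_3^i$ is assembled out of several copies of $F_2$ in which, comparing with Lemma \ref{mylemma4}, the $u^i_j$ vertices play the role of the middle vertices $v_3, v_4$ (the minority colour within each copy), while the $v^i_l$ and $p^i_j$ vertices play the role of the outer vertices $v_1, v_2, v_5, v_6$.

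I would organise the verification by vertex type. First, for each $v^i_l$ ($1 \leq l \leq 22$), no edge of $E'$ is incident, so $\#[v^i_l]_\varphi = \#_{F_3^i}[v^i_l]_\varphi$. Reading the incidences off Fig. 3 shows that each $v^i_l$ either lies in a single copy of $F_2$ as a pendant outer vertex with one $u^i_j$ neighbour, or lies at the junction of two copies of $F_2$ with two $u^i_j$ neighbours, and in either case the hypothesised assignment makes the signed count zero. The case $\#[u^i_j]_\varphi = 0$ for $1 \leq j \leq 20$ is handled identically: again no $E'$-edge is incident, and the closed neighbourhood of any $u^i_j$ inside its copy of $F_2$ balances out by Lemma \ref{mylemma4}. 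Finally, for each $p^i_j$ ($1 \leq j \leq 4$), the closed $G$-neighbourhood splits as $N_{F_3^i}[p^i_j] \cup \{a : ap^i_j \in E'\}$; the internal part contributes $0$ by the same copy-of-$F_2$ argument (since $p^i_j$ sits as an outer vertex of its copy of $F_2$), and the external part contributes $\sum_{w \in \{a : ap^i_j \in E'\}} \varphi^*(w)$ by definition of $\varphi^*$, giving the claimed formula.

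The main obstacle will be purely bookkeeping: there are $46$ vertices of $F_3^i$ to verify and the exact adjacencies must be read carefully from Fig. 3 (or reconstructed from the propagation in the proof of Lemma \ref{mylemma5}). Conceptually, however, the argument reuses nothing beyond Lemma \ref{mylemma4} together with the observation that the hypothesised coloring is the one already implicitly produced, step by step, inside the proof of Lemma \ref{mylemma5}; this is exactly why the authors can phrase Lemma \ref{mylemma6} as a corollary of that proof rather than of its statement.
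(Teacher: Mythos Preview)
Your proposal is correct and matches the paper's approach exactly: the paper does not give a separate proof of Lemma~\ref{mylemma6} but simply states that it ``follows from the proof of Lemma~\ref{mylemma5}'', and your writeup spells out precisely why---namely, that the hypothesised coloring is the one implicitly constructed in equations \eqref{myequation5}--\eqref{myequation21}, so the closed-neighbourhood balances hold by direct inspection of the $F_2$-copies making up $F_3^i$.
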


Let us now consider the following

\begin{problem}\label{myproblem4}

\textit{Instance}: A bipartite graph $G$ with $\Delta(G)=3$.

\textit{Question}: Does $G$ have a locally-balanced $2$-partition with a closed neighborhood?
\end{problem}

\begin{theorem}\label{mytheorem4}
Problem \ref{myproblem4} is $NP$-complete.
\end{theorem}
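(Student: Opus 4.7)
I would follow the template of Theorem~\ref{mytheorem3}: first observe that Problem~\ref{myproblem4} is in $NP$ (a $2$-partition is a polynomially-checkable certificate), then polynomially reduce Problem~\ref{myproblem1} to it, using $F_3^i$ as the variable gadget.

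Given an instance $(V,C)$ of Problem~\ref{myproblem1} with $V=\{x_1,\ldots,x_n\}$ and $C=\{c_1,\ldots,c_k\}$, I would build $G$ by taking, for each variable $x_i$, a disjoint copy of $F_3^i[p_1^i,p_2^i,p_3^i,p_4^i]$, with the convention that $p_1^i,\ldots,p_4^i$ represent the four occurrences of $x_i$ in the formula. For each clause $c_j=\{x_{j_1},x_{j_2},x_{j_3}\}$ I would add a fresh vertex $q_j$ together with the three edges $q_jp_{t_1}^{j_1}$, $q_jp_{t_2}^{j_2}$, $q_jp_{t_3}^{j_3}$, where $p_{t_\ell}^{j_\ell}$ encodes the particular occurrence of $x_{j_\ell}$ inside $c_j$. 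Placing every $q_j$ on the $u$-side of the natural bipartition of each $F_3^i$ keeps $G$ bipartite, and since each $p_t^i$ picks up exactly one external edge, $G$ is subcubic. The construction is manifestly polynomial in $|V|+|C|$.

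For the forward direction, given a satisfying assignment $(\beta_1,\ldots,\beta_n)$ of $f$, I would define $\varphi$ inside each $F_3^i$ exactly as prescribed by Lemma~\ref{mylemma6}, namely $\varphi(p_t^i)=\varphi(v_\ell^i)=\beta_i$ and $\varphi(u_j^i)=1-\beta_i$, and for each clause vertex set $\varphi(q_j)$ to the minority value of $\{\beta_{j_1},\beta_{j_2},\beta_{j_3}\}$, which is well-defined precisely because $NAE_3(c_j)=1$. Lemma~\ref{mylemma6} then handles every vertex internal to $F_3^i$, giving $\#[v_\ell^i]_\varphi=\#[u_j^i]_\varphi=0$ and $\#[p_t^i]_\varphi=\varphi^*(q_j)=\pm 1$, while the minority choice at $q_j$ yields $\#[q_j]_\varphi=0$; hence $\varphi$ is a locally-balanced $2$-partition. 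Conversely, suppose $\alpha$ is any locally-balanced $2$-partition with a closed neighborhood of $G$; Lemma~\ref{mylemma5} yields $\alpha(p_1^i)=\alpha(p_2^i)=\alpha(p_3^i)=\alpha(p_4^i)$, so $\beta_i:=\alpha(p_1^i)$ is well-defined. Since $d_G(q_j)=3$ makes $|N_G[q_j]|=4$ even, the bound $|\#[q_j]_\alpha|\leq 1$ in fact forces $\#[q_j]_\alpha=0$, so the three $p$-neighbors of $q_j$ cannot all carry the same value; this yields $NAE_3(c_j)=1$ for every clause, and the assignment $(\beta_1,\ldots,\beta_n)$ satisfies $f$.

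The main obstacle is the compatibility check between the gadget $F_3^i$ and the single clause-edge attached at each $p_t^i$: Lemma~\ref{mylemma6} was designed so that the internal contribution to $\#[p_t^i]_\varphi$ vanishes, leaving a single $\pm 1$ external contribution that is still within the permitted bound, and Lemma~\ref{mylemma5} continues to force the agreement of $p_1^i,\ldots,p_4^i$ after attaching the clause-edges. Once this structural compatibility is verified (together with the bipartite and subcubic properties of the resulting graph, which follow directly from the description of $F_3^i$), the remainder of the argument is routine bookkeeping.
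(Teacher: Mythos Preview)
Your proposal is correct and follows essentially the same approach as the paper: the same reduction from Problem~\ref{myproblem1}, the same variable gadget $F_3^i$ attached to fresh clause vertices $q_j$, the same invocation of Lemmas~\ref{mylemma5} and~\ref{mylemma6}, and the same parity argument at~$q_j$ in the backward direction. Your choice of $\varphi(q_j)$ as the minority value is exactly the paper's rule $\varphi^*(q_j)=-\sum_{w\in N_G(q_j)}\varphi^*(w)$ rephrased, so there is no substantive difference.
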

\begin{proof}
It is easy to see that Problem 4 is in $NP$. For the proof of the $NP$-completeness, we show a reduction from Problem 1 to Problem 4. Let $\mathcal{I}=(V,C)$ be an instance of Problem 1, where $V = \{x_1, \ldots, x_n\}$ and $C = \{c_1, \ldots, c_k \}$. We must construct a bipartite graph $G$ such that $G$ has a locally-balanced 2-partition with a closed neighborhood if and only if $f(x_1,\ldots,x_n)=NAE_3(c_1) \; \land \; \cdots \; \land \; NAE_3(c_k)$ formula is satisfiable.
Let us construct a graph $G$ in such a way:
\begin{gather*}
V(G) = \left(\bigcup\limits_{i=1}^{n}V(F_3^i \left[p_1^i,p_2^i,p_3^i,p_4^i \right])\right) \cup \{q_1, q_2 \ldots, q_k\},\\
E(G)=\left\{p_{t}^{i}q_j:\; 1 \leq i \leq n, 1 \leq j \leq k, x_i \in c_j \; \text{and it is $x_i$'s $t$-th appearance in the formula}\right\}.
\end{gather*}

It is not hard to see that the graph $G$ can be constructed from $V$ and $C$ in polynomial time, since we used only $46n+k$ vertices. Clearly, $G$ is a bipartite graph. Since each clause contains exactly three distinct literals and $\Delta(F_3^i \left[p_1^i,p_2^i,p_3^i,p_4^i \right])=3$, we have $\Delta(G)=3$. Suppose $(\beta_1,\ldots,\beta_n)$ is a true assignment of $f(x_1,\ldots,x_n)$. We show that $G$ has a locally-balanced $2$-partition with a closed neighborhood. Let us define a $2$-partition $\varphi$ of $G$ by two steps as follows:

\begin{enumerate}
    \item For any $w \in V(F_3^i \left[p_1^i,p_2^i,p_3^i,p_4^i \right])$ $(1 \leq i \leq n)$, 
    \[ 
        \varphi(w) = \left\{
        \begin{tabular}{lll}
            $\beta_i$, & if $w=p^i_l$,& where $1 \leq l \leq 4$,\\
            $\beta_i$, & if $w=v^i_l$, & where $1 \leq l \leq 22$,\\
            $1-\beta_i$, & if $w=u^i_l$, & where $1 \leq l \leq 20$.\\
        \end{tabular}%
        \right.
    \]
    \item For any  $q_i \in V(G)$ $(1 \leq i \leq k)$, 
    \[ 
        \varphi^*(q_i) = - \sum_{w\in N_G(q_i)}\varphi^*(w).
    \]
\end{enumerate}

Let us show, that $\varphi$ is a locally-balanced $2$-partition with a closed neighborhood.

\textbf{(a)} If $v^i_l \in V(G)$ $(1 \leq l \leq 22)$, then, by the definition of $\varphi$ and Lemma \ref{mylemma6}, we obtain
\[
\#[v^i_l]_\varphi = 0.
\]

\textbf{(b)} If $u^i_l \in V(G)$ $(1 \leq l \leq 20)$, then, by definition of $\varphi$ and Lemma \ref{mylemma6}, we have
\[
\#[u^i_l]_\varphi = 0.
\]

\textbf{(c)} If $p^i_l \in V(G)$ $(1 \leq l \leq 4)$, then, by definition of $\varphi$, Lemma \ref{mylemma6} and taking into account that vertex $p^i_l$ has only one neighbor outside of the subgraph $F_3^i \left[p_1^i,p_2^i,p_3^i,p_4^i \right]$, we get 
\[
|\#[p^i_l]_\varphi| \leq 1.
\]

\textbf{(d)} If $q_i \in V(G)$ $(1 \leq i \leq k)$, then, by definition of $\varphi$, we get
\[
   \varphi^*(q_i) = - \sum_{w\in N_G(q_i)}\varphi^*(w),
\]
which implies that
\[
\#[q_i]_\varphi = - \sum_{w\in N_G(q_i)}\varphi^*(w) + \sum_{w\in N_G(q_i)}\varphi^*(w) = 0.
\]

Conversely, suppose that $\alpha$ is a locally-balanced $2$-partition with a closed neighborhood of $G$. By Lemma \ref{mylemma5}, we have $\alpha(p^i_1)=\alpha(p^i_2)=\alpha(p^i_3)=\alpha(p^i_4)$. Let us define an assignment of $f(x_1,\ldots,x_n)$ as follows: $x_i=\alpha(p^i_1)$ $(1 \leq i \leq n)$. Let $c_i \in C$ $(1 \leq i \leq k)$ and $c_i = \{x_{j_1},x_{j_2},x_{j_3}\}$. From this and taking into account that $|\#(q_i)_\alpha| \leq 1$, $\alpha(p^i_1)=\alpha(p^i_2)=\alpha(p^i_3)=\alpha(p^i_4)$ and $d_{G}(q_i)=3$, we obtain 

\[
NAE_3(x_{j_1},x_{j_2},x_{j_3})=1,
\]
which implies that $f(x_1,\ldots,x_n)$ is satisfiable. \hfill $\Box$
\end{proof}

For $i_1,l_1,i_2,l_2,i_3,l_3 \in \mathbf{N}$, let us define a graph $F_4^{i_1,l_1,i_2,l_2,i_3,l_3}$ (see Fig. 4):

\begin{figure}[h]
\begin{center}
\includegraphics[width=0.40\textwidth]{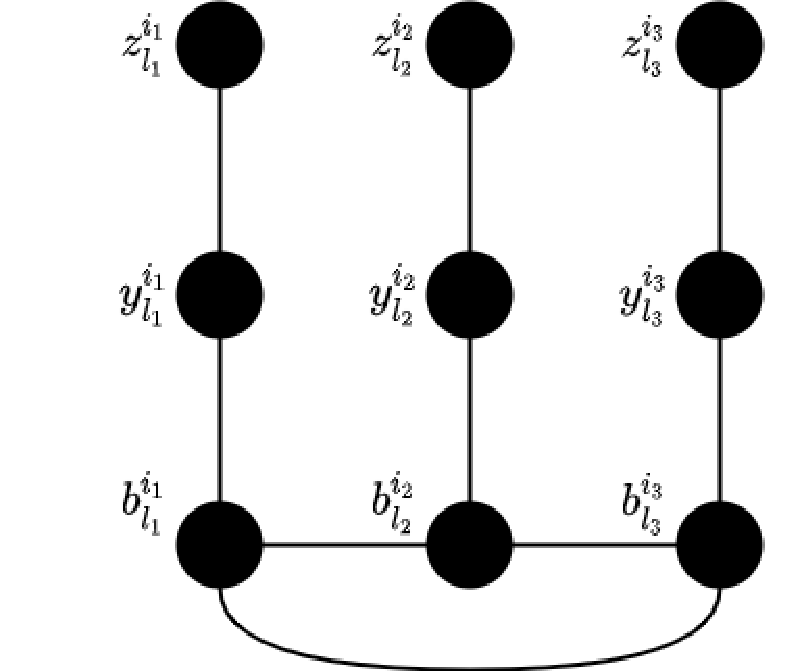}\\
\caption{The graph $F_4^{i_1,l_1,i_2,l_2,i_3,l_3}$.}\label{fig4}
\end{center}
\end{figure}

Finally, we formulate and prove the $NP$-completeness of another problem on locally balanced $2$-partitions with a closed neighborhood.

\begin{problem}\label{myproblem5}

Instance: An odd graph $G$ with $\Delta(G)=3$.

Question: Does $G$ have a locally-balanced $2$-partition with a closed neighborhood?\\
\end{problem}

\begin{theorem}\label{mytheorem5}
Problem \ref{myproblem5} is $NP$-complete.
\end{theorem}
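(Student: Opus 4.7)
The plan is to establish NP-hardness by a polynomial-time reduction from NAE-3-Sat-E4 (Problem \ref{myproblem1}), following the same blueprint as the proof of Theorem \ref{mytheorem4} but replacing the degree-$3$ clause vertex $q_i$ by the more elaborate gadget $F_4^{i_1,l_1,i_2,l_2,i_3,l_3}$. Membership in NP is immediate: any candidate $2$-partition can be checked in polynomial time by computing $\#[v]_\varphi$ at each vertex. For the NP-hardness, given an instance $\mathcal{I}=(V,C)$ with $V=\{x_1,\dots,x_n\}$ and $C=\{c_1,\dots,c_k\}$, I would construct a graph $G$ whose vertex set is $\bigl(\bigcup_{i=1}^{n} V(F_3^i[p_1^i,p_2^i,p_3^i,p_4^i])\bigr)\cup \bigl(\bigcup_{j=1}^{k} V(F_4^{i_1,l_1,i_2,l_2,i_3,l_3})\bigr)$, where for clause $c_j=\{x_{i_1},x_{i_2},x_{i_3}\}$ the indices $l_s$ record that $x_{i_s}$ makes its $l_s$-th appearance in this clause; the gadget $F_4$ is then attached via its three input vertices to $p_{l_1}^{i_1}, p_{l_2}^{i_2}, p_{l_3}^{i_3}$. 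The size of $G$ is linear in $n+k$, so the reduction is polynomial.

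The next step is to verify the structural promises of Problem \ref{myproblem5}: that $G$ is odd with $\Delta(G)=3$. Because every $x_i$ appears in exactly four clauses, each $p_l^i$ receives exactly one edge from an $F_4$-gadget in addition to its edges inside $F_3^i$; choosing the attachment so that the total degree of $p_l^i$ becomes $3$, while every other vertex of $F_3^i$ and $F_4$ already has degree $1$ or $3$ by construction, makes $G$ an odd graph with maximum degree $3$. This bookkeeping is routine but must be carried out carefully, especially to confirm that $F_4$ itself has only odd-degree vertices at each non-input vertex.

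For correctness, I would prove the two directions. In the forward direction, given a satisfying NAE-assignment $(\beta_1,\dots,\beta_n)$, define $\varphi(p_l^i)=\beta_i$ and extend $\varphi$ inside each $F_3^i$ exactly as in the proof of Theorem \ref{mytheorem4} (using Lemma \ref{mylemma6}), and inside each $F_4$-gadget using the fact that the three input values are not all equal. The local balance conditions inside $F_3^i$ follow from Lemma \ref{mylemma6}, and an analogous lemma for $F_4$ — stating that if the three input vertices are not monochromatic then $F_4$ admits a locally-balanced extension with $\#[w]_\varphi=0$ for every internal $w$ and with the input constraints being satisfied — closes this direction. Conversely, if $\alpha$ is a locally-balanced $2$-partition of $G$ with a closed neighborhood, Lemma \ref{mylemma5} gives $\alpha(p_1^i)=\alpha(p_2^i)=\alpha(p_3^i)=\alpha(p_4^i)$, so setting $x_i:=\alpha(p_1^i)$ is well defined; then the NAE-forcing property of $F_4$ (the converse direction of the lemma above) implies that in every clause the three literal-values are not all equal, so the NAE formula is satisfied.

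The main obstacle is the design and analysis of the gadget $F_4^{i_1,l_1,i_2,l_2,i_3,l_3}$ and the corresponding lemma: one must show that $F_4$ enforces exactly the NAE constraint on its three inputs, that it admits a locally-balanced $2$-partition with a closed neighborhood precisely when the inputs are not monochromatic, and that every internal vertex has odd degree at most $3$. This is genuinely delicate because the parities forced by Lemma \ref{mylemma3} at pendants and by the requirement $\#[v]_\varphi=0$ at odd-degree internal vertices propagate throughout the gadget, in the spirit of the chain of equations (\ref{myequation5})--(\ref{myequation21}) used for $F_3^i$; verifying this case analysis for $F_4$ (and that no unwanted global parity obstruction appears when the clause-gadgets are glued onto the variable-gadgets) is the technical core of the argument.
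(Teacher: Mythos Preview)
Your plan diverges from the paper's construction in a way that creates a genuine gap. You propose to \emph{replace} the clause vertex $q_j$ by the gadget $F_4$ and to treat $F_4$ as the NAE-enforcing device. In the paper, however, the vertices $q_1,\ldots,q_k$ are \emph{retained} exactly as in the proof of Theorem~\ref{mytheorem4}: each $q_j$ is still adjacent to the three variable-outputs $p^{i_1}_{l_1},p^{i_2}_{l_2},p^{i_3}_{l_3}$, and it is $q_j$ (via $|\#(q_j)_\alpha|\le 1$ and $d_G(q_j)=3$) that forces the NAE condition in the converse direction. The gadget $F_4^{i_1,l_1,i_2,l_2,i_3,l_3}$ is \emph{added on top of this}: its nine vertices $y^{i_s}_{l_s},z^{i_s}_{l_s},b^{i_s}_{l_s}$ ($s=1,2,3$) serve only to repair degree parity. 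Concretely, each $y^{i_s}_{l_s}$ is joined to $p^{i_s}_{l_s}$, so that $p^{i_s}_{l_s}$ acquires \emph{two} external neighbours, $q_j$ and $y^{i_s}_{l_s}$; the explicit assignment $\varphi(y^{i_s}_{l_s})=1-\varphi(q_j)$ makes these two contributions cancel in $\#[p^{i_s}_{l_s}]_\varphi$, while the pendant $z^{i_s}_{l_s}$ together with the triangle on $b^{i_1}_{l_1},b^{i_2}_{l_2},b^{i_3}_{l_3}$ absorbs the rest. The balance at each $b$-vertex is shown to equal $-\#[q_j]_\varphi$, so $F_4$ does not independently enforce NAE; it merely echoes the constraint already imposed at $q_j$.

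Your version, with the stated attachment, cannot produce an odd graph. In $F_3^i$ each input $p^i_l$ has internal degree~$1$: this is implicit in the proof of Theorem~\ref{mytheorem4}, where Lemma~\ref{mylemma6} gives $\#[p^i_l]_\varphi=\varphi^*(q_j)=\pm1$, forcing $|N_G[p^i_l]|$ to be odd and hence $d_G(p^i_l)$ to be even, i.e.\ internal degree~$1$ plus the single edge to $q_j$. If you delete $q_j$ and, as you write, give $p^i_l$ ``exactly one edge from an $F_4$-gadget'', then $d_G(p^i_l)=2$, which is even; your $G$ fails the hypothesis of Problem~\ref{myproblem5}. The paper's fix is precisely to keep both the edge to $q_j$ and the edge to $y^{i_s}_{l_s}$, bringing $d_G(p^i_l)$ to~$3$. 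So the ``main obstacle'' you identify --- designing an odd-degree NAE-gadget to stand in for $q_j$ --- is not the route the paper takes, and your outline as stated leaves the parity of the $p$-vertices unresolved.
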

\begin{proof}
It is easy to see that Problem 5 is in $NP$. For the proof of the $NP$-completeness, we show a reduction from Problem 1 to Problem 5. Let $\mathcal{I}=(V,C)$ be an instance of Problem 1, where $V = \{x_1, \ldots, x_n\}$ and $C = \{c_1, \ldots, c_k \}$. We must construct an odd graph $G$ such that $G$ has a locally-balanced $2$-partition with a closed neighborhood if and only if $f(x_1,\ldots,x_n)=NAE_3(c_1) \; \land \; \cdots \; \land \; NAE_3(c_k)$ formula is satisfiable.
Let us construct a graph $G$ in such a way:
\begin{gather*}
V(G) = \left(\bigcup\limits_{i=1}^{n}V(F_3^i \left[p_1^i,p_2^i,p_3^i,p_4^i \right])\right) \cup \{q_1, q_2 \ldots, q_k\}\cup\\
\cup \{w:\;w \in V(F_4^{i_1,l_1,i_2,l_2,i_3,l_3}), \text{where} \; c_j = \{x_{i_1},x_{i_2},x_{i_3}\} \;\\
\text{and it is $x_{i_1}$'s $l_1$-th, $x_{i_2}$'s $l_2$-th and $x_{i_3}$'s $l_3$-th}\\
\text{appearances in the formula, } 1 \leq j \leq k\},\\
E(G)=\{e, q_j p^{i_1}_{l_1},q_j p^{i_2}_{l_2},q_j p^{i_3}_{l_3},y^{i_1}_{l_1} p^{i_1}_{l_1},y^{i_2}_{l_2} p^{i_2}_{l_2},y^{i_3}_{l_3} p^{i_3}_{l_3} : \;e \in E(F_4^{i_1,l_1,i_2,l_2,i_3,l_3}),\\
\text{where} \; c_j = \{x_{i_1},x_{i_2},x_{i_3}\} \; \text{and it is $x_{i_1}$'s $l_1$-th, $x_{i_2}$'s $l_2$-th and $x_{i_3}$'s $l_3$-th}\\
\text{appearances in the formula, } 1 \leq j \leq k \} \cup \left(\bigcup\limits_{i=1}^{n}E(F_3^i \left[p_1^i,p_2^i,p_3^i,p_4^i\right])\right).
\end{gather*}

The graph $G$ has $46n+10k$ vertices, so it can be constructed from $V$ and $C$ in polynomial time. Clearly, $G$ is an odd graph. Since each clause contains exactly three distinct literals, $\Delta(F_3^i \left[p_1^i,p_2^i,p_3^i,p_4^i \right])=3$, $\Delta(F_4^{i_1,l_1,i_2,l_2,i_3,l_3})=3$ and by the construction of $G$, we have $\Delta(G)=3$. Suppose $(\beta_1,\ldots,\beta_n)$ is a true assignment of $f(x_1,\ldots,x_n)$. We show that $G$ has a locally-balanced $2$-partition with a closed neighborhood. Let us define a $2$-partition $\varphi$ of $G$ by three steps as follows:

\begin{enumerate}
    \item For any $w \in V(F_3^i \left[p_1^i,p_2^i,p_3^i,p_4^i \right])$ $(1 \leq i \leq n)$ 
    \[ 
        \varphi(w) = \left\{
        \begin{tabular}{lll}
            $\beta_i$, & if $w=p^i_l$,& where $1 \leq l \leq 4$,\\
            $\beta_i$, & if $w=v^i_l$, & where $1 \leq l \leq 22$,\\
            $1-\beta_i$, & if $w=u^i_l$, & where $1 \leq l \leq 20$.\\
        \end{tabular}%
        \right.
    \]
    \item For any $q_i \in V(G)$ $(1 \leq i \leq k)$ 
    \[ 
        \varphi^*(q_i) = - \sum_{w\in N_G(q_i)}\varphi^*(w).
    \]
    \item For any $w \in V(F_4^{i_1,l_1,i_2,l_2,i_3,l_3})$ $(1 \leq i_1,i_2,i_3 \leq n, \, 1 \leq l_1,l_2,l_3 \leq 4)$
    \[ 
        \varphi(w) = \left\{
        \begin{tabular}{lll}
            $1 - \varphi(q_j)$, & if $w=y^{i_t}_{l_t}$ and $p^{i_t}_{l_t} q_j \in E(G)$, & where $1 \leq t \leq 3$, $1 \leq j \leq k$,\\
            $\varphi(q_j)$, & if $w=z^{i_t}_{l_t}$ and $p^{i_t}_{l_t} q_j \in E(G)$, & where $1 \leq t \leq 3$, $1 \leq j \leq k$,\\
            $1 - \varphi(p^{i_t}_{l_t})$, & if $w=b^{i_t}_{l_t}$ and $p^{i_t}_{l_t} q_j \in E(G)$, & where $1 \leq t \leq 3$, $1 \leq j \leq k$.\\
        \end{tabular}%
        \right.
    \]
\end{enumerate}

Let us show that $\varphi$ is a locally-balanced $2$-partition with a closed neighborhood. 

\textbf{(a)} If $v^i_l \in V(G)$ $(1 \leq l \leq 22)$, then, by definition of $\varphi$ and Lemma \ref{mylemma6}, we have
\[
\#[v^i_l]_\varphi = 0.
\]

\textbf{(b)} If $u^i_l \in V(G)$ $(1 \leq l \leq 20)$, then, by definition of $\varphi$ and Lemma \ref{mylemma6}, we get
\[
\#[u^i_l]_\varphi = 0.
\]

\textbf{(c)} If $p^i_l \in V(G)$ and $p^i_lq_j\in E(G)$ $(1 \leq l \leq 4)$, then, by definition of $\varphi$ and Lemma \ref{mylemma6}, we obtain 
\begin{align*}
\#[p^i_l]_\varphi = \varphi^*(q_j) + \varphi^*(y^i_l) = 0.
\end{align*}

\textbf{(d)} If $q_i \in V(G)$ $(1 \leq i \leq k)$, then, By definition of $\varphi$, we get
\[
   \varphi^*(q_i) = - \sum_{w\in N_G(q_i)}\varphi^*(w),
\]
which implies that
\begin{equation}\label{myequation22}
\#[q_i]_\varphi = - \sum_{w\in N_G(q_i)}\varphi^*(w) + \sum_{w\in N_G(q_i)}\varphi^*(w) = 0.
\end{equation}

\textbf{(e)} If $z^{i_t}_{l_t} \in V(F_4^{i_1,l_1,i_2,l_2,i_3,l_3})$ and $p^{i_t}_{l_t} q_j \in E(G)$ $(1 \leq t \leq 3)$, then, by definition of $\varphi$, we get
\[
\#[z^{i_t}_{l_t}]_\varphi = \varphi^*(z^{i_t}_{l_t}) + \varphi^*(y^{i_t}_{l_t}) = \varphi^*(q_j) + (-\varphi^*(q_j)) = 0.
\]

\textbf{(f)} If $y^{i_t}_{l_t} \in V(F_4^{i_1,l_1,i_2,l_2,i_3,l_3})$ and $p^{i_t}_{l_t} q_j \in E(G)$ $(1 \leq t \leq 3)$, then, by definition of $\varphi$, we obtain
\begin{align*}
\#[y^{i_t}_{l_t}]_\varphi = \varphi^*(y^{i_t}_{l_t}) + \varphi^*(z^{i_t}_{l_t}) + \varphi^*(p^{i_t}_{l_t}) + \varphi^*(b^{i_t}_{l_t}) \\
= -\varphi^*(q_j) + \varphi^*(q_j) + \varphi^*(p^{i_t}_{l_t}) + (-\varphi^*(p^{i_t}_{l_t})) = 0.
\end{align*}

\textbf{(g)} If $b^{i_t}_{l_t} \in V(F_4^{i_1,l_1,i_2,l_2,i_3,l_3})$ and $p^{i_t}_{l_t} q_j \in E(G)$ $(1 \leq t \leq 3)$, then, by definition of $\varphi$ and (\ref{myequation22}), we have 
\begin{align*}
\#[b^{i_t}_{l_t}]_\varphi = \varphi^*(b^{i_1}_{l_1}) + \varphi^*(b^{i_2}_{l_2}) + \varphi^*(b^{i_3}_{l_3}) + \varphi^*(y^{i_t}_{l_t}) \\
= -\varphi^*(p^{i_1}_{l_1}) - \varphi^*(p^{i_2}_{l_2}) - \varphi^*(p^{i_3}_{l_3}) - \varphi^*(q_j) = -\#[q_j]_\varphi = 0.
\end{align*}

Conversely, suppose that $\alpha$ is a locally-balanced $2$-partition with an open neighborhood of $G$. By Lemma \ref{mylemma5}, we have $\alpha(p^i_1)=\alpha(p^i_2)=\alpha(p^i_3)=\alpha(p^i_4)$. Let us define an assignment of $f(x_1,\ldots,x_n)$ as follows: $x_i=\alpha(p^i_1)$ $(1 \leq i \leq n)$. Let $c_i \in C$ $(1 \leq i \leq k)$ and $c_i = \{x_{j_1},x_{j_2},x_{j_3}\}$. From this and taking into account that $|\#(q_i)_\alpha| \leq 1$, $\alpha(p^i_1)=\alpha(p^i_2)=\alpha(p^i_3)=\alpha(p^i_4)$ and $d_G(q_i)=3$, we obtain 

\[
NAE_3(x_{j_1},x_{j_2},x_{j_3})=1,
\]
which implies that $f(x_1,\ldots,x_n)$ is satisfiable. \hfill $\Box$
\end{proof}
\bigskip

\section{Concluding remarks}

In this paper we studied the problem of the existence of locally-balanced $2$-partition with an open (closed) neighborhood. In particular, we proved:

\begin{enumerate}
\item the problem of deciding if a given graph has a locally-balanced $2$-partition with an open neighborhood is $NP$-complete for biregular bipartite graphs and even bipartite graphs with maximum degree $4$;

\item the problem of deciding if a given graph has a locally-balanced $2$-partition with a closed neighborhood is $NP$-complete even for subcubic bipartite graphs and odd graphs with maximum degree $3$.
\end{enumerate}

Although the problem of the existence of locally-balanced $2$-partition with an open (closed) neighborhood is $NP$-complete even for subcubic bipartite graphs, the complexity status of the problem is unknown, for example, for subcubic bipartite planar graphs or planar odd (even) graphs. We think that these classes of graphs are good subjects for further considerations. Another direction for further considerations are locally-balanced $k$-partitions ($k\geq 2$) with an open (closed) neighborhood, which were introduced in \cite{GharibPet2}. In particular, the complexity status of the problem of the existence of locally-balanced $3$-partitions with an open (closed) neighborhood of graphs is unknown.
\bigskip

\bibliographystyle{elsarticle-num}
\bibliography{<your-bib-database>}



\end{document}